\def\veps{\varepsilon}
\def\vp{\varphi}
\def\eq#1{(\ref{#1})}
\def\({\left(\begin{array}{cccccc}}
\def\){\end{array}\right)}
\def\eq#1{(\ref{#1})}
\def\({\left(\begin{array}{cccccc}}
\def\){\end{array}\right)}
\def\bes{\begin{eqnarray}}
\def\ees{\end{eqnarray}}
\newcommand{\vertiii}[1]{{\left\vert\kern-0.25ex\left\vert\kern-0.25ex\left\vert #1 
    \right\vert\kern-0.25ex\right\vert\kern-0.25ex\right\vert}}
\newcommand{\beq}{\begin{equation}}
\newcommand{\eeq}{\end{equation}}
\newcommand{\bea}{\begin{eqnarray}}
\newcommand{\eea}{\end{eqnarray}}
\newcommand{\beann}{\begin{eqnarray*}}
\newcommand{\eeann}{\end{eqnarray*}}
\newcommand{\RR}{\mathbb{R}}
\newcommand{\NN}{\mathbb{N}}
\newcommand{\bv}{BV}
\newcommand{\bpv}{BPV}
\newcommand{\epsp}{(\veps,p)\text{-}\mathrm{Var\,}}
\newcommand{\epsdeltaone}{(\veps+\delta,1)\text{-}\mathrm{Var\,}}
\newcommand{\epsnaughtp}{(\veps_0,p)\text{-}\mathrm{Var\,}}
\newcommand{\epsnaughtone}{(\veps_0,1)\text{-}\mathrm{Var\,}}
\newcommand{\epsnaughtinf}{(\veps_0,\infty)\text{-}\mathrm{Var\,}}
\newcommand{\epsone}{(\veps,1)\text{-}\mathrm{Var\,}}
\newcommand{\epsinf}{(\veps,\infty)\text{-}\mathrm{Var\,}}
\newcommand{\bp}{\begin{proof}}
\newcommand{\ep}{\end{proof}}
\DeclareMathOperator{\dv}{div}
\DeclareMathOperator{\ess}{ess\, sup}
\DeclareMathOperator{\var}{var}
\DeclareMathOperator{\evar}{\veps-var}
\DeclareMathOperator{\essen}{ess}
\DeclareMathOperator{\V}{Var}
\newtheorem{theorem}{Theorem}[section]
\newtheorem{proposition}[theorem]{Proposition}
\newtheorem{observation}[theorem]{Observation}
\newtheorem{lemma}[theorem]{Lemma}
\newtheorem{definition}[theorem]{Definition}
\newtheorem{example}[theorem]{Example}
\newtheorem{remark}[theorem]{Remark}
\numberwithin{equation}{section}
\begin{document}

\title{Extensions of BV compactness criteria}

\author{Helge Kristian Jenssen} 
\address{ H.\ K.\ Jenssen, Department of Mathematics, Penn State University, University Park, 
State College, PA 16802, USA ({\tt jenssen@math.psu.edu}).}

\thanks{This work was partially supported by the National Science Foundation [grant DMS-1813283].}

\date{\today}
\begin{abstract} 
	Helly's selection theorem provides a criterion for compactness of sets of 
	single-variable functions with bounded pointwise variation. Fra{\v{n}}kov{\'a} has 
	given a proper extension of Helly's theorem to the setting of single-variable regulated 
	functions.
	We show how a similar approach yields extensions of the standard compactness 
	criterion for multi-variable functions of bounded variation. 
\end{abstract}

\maketitle

Keywords: compactness; functions of bounded variation; regulated functions.

MSC2020: 26A45, 26B30, 26B99.

\tableofcontents

\section{Introduction}\label{intro}
Helly's selection theorem (Theorem \ref{helly} below) provides a 
compactness criterion for sequences of one-variable functions 
with uniformly bounded pointwise variation.
In her work on regulated functions, i.e., one-variable functions
admitting finite left and right limits at all points, Fra{\v{n}}kov{\'a}
\cite{fr} provided an extension of Helly's theorem. Simple examples 
demonstrate that Fra{\v{n}}kov{\'a}'s
theorem (Theorem \ref{fr_thm} below) provides a genuine extension. 
In particular, it guarantees pointwise everywhere 
convergence of a subsequence in some cases where the  
sequence is not bounded in variation. 

The main objective of the present work is to provide a generalization of Fra{\v{n}}kov{\'a}'s
theorem to the multi-variable setting. For a ``nice'' open set $\Omega\subset\RR^N$,
with $N\geq 1$, the space $\bv(\Omega)$ of functions of bounded variation admits a 
compactness result \`a la Helly's (see Theorem \ref{multi_var_compact} below): 
any sequence which is bounded in $BV$-norm
contains a subsequence converging in $L^1$-norm to a $BV$-function. 
We shall see how this compactness criterion can play the same role that Helly's 
theorem plays in Fra{\v{n}}kov{\'a}'s approach in \cite{fr}. As a consequence we obtain 
a compactness result (Theorem \ref{multi_var_frankova_any_p}) 
which guarantees $L^1$-convergence of a subsequence 
in certain cases where the original sequence is unbounded in $\bv$.

A key ingredient in Fra{\v{n}}kov{\'a}'s work \cite{fr} is the notion of {\em $\veps$-variation} 
of a bounded function $u$ of one variable: for $\veps>0$, $\evar u$ is defined as the smallest amount
of pointwise variation a function uniformly $\veps$-close to $u$ can have.
We shall introduce generalizations of this notion to functions
of any number of variables (including 1-variable functions). For the purpose
of extending the multi-d criterion for $BV$-compactness, it is natural to define 
$\veps$-variation for general $L^1$-functions. In addition, we want to consider more general
norms than the uniform norm in measuring distance between a function 
and its $BV$-approximants. The standard $L^p$-norms, with $1\leq p\leq\infty$, provide 
a natural choice, and we therefore introduce the notion of {\em $(\veps,p)$-variation} of  
general $L^1$-functions (Definition \ref{eps_p_varn_defn}). 
Our main result is that the $BV$-compactness
criterion admits an extension to this setting (Theorem \ref{multi_var_frankova_any_p}).
Just as for Fra{\v{n}}kov{\'a}'s extension of Helly's theorem, simple examples
demonstrate that this is a proper extension.

We note that even in the case of one-variable functions and with $p=\infty$, which is the setting 
closer to that of Fra{\v{n}}kov{\'a}'s, our result is not identical to hers.
The latter concerns pointwise variation and everywhere pointwise convergence, i.e., 
the setting of Helly's theorem. In contrast, our Theorem \ref{multi_var_frankova_any_p}
is formulated in terms of variation and $L^p$-convergence of (strictly speaking) equivalence
classes of functions agreeing up to null-sets. (For the relation between 
pointwise variation and variation of a one-variable function, see Remark \ref{notns_of_varn}.)
In particular, for $p=\infty$, we employ $L^\infty$-norm rather than uniform norm.
This distinction is of relevance when we seek to generalize the notion of regulated function to 
the multi-variable case (see below).

\begin{remark}
While our main result applies with any 
choice of $L^p$-norm, the values $p=1$ and $p=\infty$ 
are the more relevant ones. The case $p=1$ is natural since the original
criterion for $BV$-compactness in several dimensions guarantees $L^1$-convergence
of a subsequence. On the other hand, $p=\infty$ provides a setting closer to 
that of Fra{\v{n}}kov{\'a} \cite{fr}. Also, we have been able to establish 
certain useful properties only when $p=1$ or $p=\infty$. These concern attainment of 
$(\veps,p)$-variation, continuity with respect to $\veps$, and lower semi-continuity 
with respect to $L^1$-convergence; see Sections 
\ref{multi_var_case_p=1}-\ref{multi_var_case_p=infty}.
\end{remark}

It turns out that, besides providing a means for extending Helly's theorem, Fra{\v{n}}kov{\'a}'s 
notion of $\veps$-variation also yields a characterization of regulated functions. 
Indeed, a function is regulated if and only if its 
$\veps$-variation is finite for each $\veps>0$, \cite{fr}. 
Neither the definition of regulated functions (Definition \ref{reg}), 
nor various characterizations (see Theorem 2.1, pp.\ 213-214 in \cite{dn}), 
admit a straightforward generalization to higher dimensions (see also \cite{da}). 
In contrast, Fra{\v{n}}kov{\'a}'s characterization may appear to offer an obvious 
way of defining regulated functions of several variables. However, there is a catch:
$\veps$-variation for a one-variable function is defined in terms of pointwise 
variation, a notion lacking in higher dimensions. For functions of several 
variables one could of course replace it by variation; however, the resulting definition,
when restricted to the one-variable case,  
will not reproduce the class of regulated functions of one variable.

Instead, having introduced the notion of $(\veps,p)$-variation, we use it to
define {\em $p$-regulated} functions as those functions whose 
$(\veps,p)$-variation is finite for all $\veps>0$. The resulting function space 
is introduced in Section \ref{eps_p_varns_p_reg_fncs}, and this provides the setting 
for our main result on extensions of the standard $\bv$ compactness criterion for 
multi-variable functions. 
For the particular case of $\infty$-regulated functions of one variable,
we show that these are precisely the functions that are ``essentially regulated,''
i.e., possessing a regulated version (see Proposition \ref{1_d_reg_vs_infty_reg}).

Finally, we comment on the possibility of applying Fra{\v{n}}kov{\'a}'s strategy to other 
compactness criteria in function spaces. Such criteria invariably involve a requirement that 
the given sequence (or set) of functions satisfy some uniform requirement. In essence, Fra{\v{n}}kov{\'a}'s 
strategy amounts to replacing a uniform requirement on the given functions by a (weaker)
uniform requirement on nearby functions. Specifically, in Fra{\v{n}}kov{\'a}'s 
extension of Helly's theorem, this is done by replacing the requirement of a 
uniform variation bound by the weaker requirement of uniformly bounded $\veps$-variations 
(see Definitions \ref{unif_eps_varns} and  \ref{multi_var_unif_eps_varns}).

It is natural to ask if a similar approach can be applied to extend other compactness theorems
for function spaces, including Fra{\v{n}}kov{\'a}'s own theorem.
This is a somewhat open-ended question as there is freedom in how to set up an extension.
However, we have not been able to extend Fra{\v{n}}kov{\'a}'s theorem, the Ascoli-Arzel\`a theorem, 
or the Kolmogorov-Riesz  theorem. In each case we find that a natural implementation 
of Fra{\v{n}}kov{\'a}'s strategy fails: the assumptions on the given function sequence are so strong 
that the original compactness criterion directly applies to it. One might say that 
these compactness results are saturated with respect to Fra{\v{n}}kov{\'a}'s strategy.

The rest of the article is organized as follows. Notation and conventions are 
recorded below. Section \ref{frankova_helly_extn}  
recalls Helly's theorem for sequences of one-variable functions with bounded (pointwise) 
variation, together with Fra{\v{n}}kov{\'a}'s extension to the space of regulated functions.
We include the key definitions introduced in \cite{fr} and recall without proof some 
of the results from \cite{fr}, including the characterization of regulated functions in 
terms of $\veps$-variation. For completeness we include a proof of Fra{\v{n}}kov{\'a}'s
theorem. Section \ref{multi_var_case} concerns the generalization
of Fra{\v{n}}kov{\'a}'s strategy to the case of functions of several variables. 
We first recall the standard compactness result for bounded sequences in $\bv(\Omega)$,
and then define $(\veps,p)$-variation of $L^1(\Omega)$-functions. 
The space $\mathcal R_p(\Omega)$ of $p$-regulated functions is introduced in Definition \ref{p_reg}. 
The main result, Theorem \ref{multi_var_frankova_any_p}, is then formulated and 
proved by mimicking Fra{\v{n}}kov{\'a}'s proof. Further properties of $(\veps,p)$-variation 
for $p=1$ and $p=\infty$ are established in Sections 
\ref{multi_var_case_p=1}-\ref{multi_var_case_p=infty}. Section \ref{reg_vs_infty_reg_1_d}
provides the relationship between $\infty$-regulated and standard regulated functions
of one variable.
Finally, in Section \ref{aa} we describe our negative findings about the possibility of 
applying Fra{\v{n}}kov{\'a}'s strategy to other compactness criteria.

\medskip

\noindent{\bf Notations and conventions:} For sequences of functions we write $(u_n)$ or $(u_n)_n$ for $(u_n)_{n=1}^\infty$.
Given a set of functions $X$, we write $(u_n)\subset X$ to mean that $u_n\in X$ for all $n\geq 1$.
We write $(u_{n(k)})\subset (u_n)$ to mean that $(u_{n(k)})_k$ is a subsequence of $(u_n)_n$.
For $m\in\NN$ we fix a norm $|\cdot|$ on $\RR^m$.
For any set $U$ and any function $u:U\to\RR^m$ we define its uniform norm by
\[\|u\|:=\sup_{x\in U} |u(x)|.\]
The set of bounded functions on $U$ is denoted 
\[\mathcal B(U):=\{u:U\to\RR^m\,|\, \|u\|<\infty\};\]
it is a standard result that $(\mathcal B(U),\|\cdot\|)$ is a Banach space.

For a normed space $(\mathcal X,\vertiii{\cdot})$, a sequence $(u_n)\subset \mathcal X$ 
is  {\em bounded} provided $\sup_n\vertiii{u_n}<\infty$. For a Lebesgue measurable set 
$\Omega\subset\RR^N$ ($N\geq 1$), $|\Omega|$ denotes its Lebesgue measure.
The open ball of radius $r$ about $x\in\RR^N$ is denoted $B_r(x)$.
For an open set $\Omega\subset\RR^N$, $L^p(\Omega)$, $1\leq p\leq \infty$, denotes the 
set of Lebesgue measurable functions with finite $L^p(\Omega)$-norm
\[\|u\|_{L^p(\Omega)}\equiv\|u\|_{p}=\Big(\int_\Omega |u(x)|^p\, dx\Big)^\frac{1}{p}\qquad \text{for $1\leq p< \infty$,}\]
and 
\[\|u\|_{L^\infty(\Omega)}\equiv\|u\|_\infty=\ess_{x\in\Omega} |u(x)|. \]
Throughout, the terms ``measurable'', ``almost everywhere'' and ``for almost all'' are 
understood with respect to Lebesgue measure. A {\em version} of a measurable function 
$u:\Omega\to\RR$ refers to any function $\bar u:\Omega\to\RR$
agreeing almost everywhere with $u$.

We use the notation ``$\var u$'' for the {\em pointwise variation} of a one-variable function 
$u$, and the notation ``$\V u$'' for the {\em variation} of a (one- or multi-variable) function $u$. These are
defined in \eq{var} and \eq{Var}, respectively. Our notation differs from that of \cite{afp} and \cite{le}
which use pV$(u,\Omega)$ and $\V u$, respectively, for 
the pointwise variation of a one-variable function, and $V(u,\Omega)$ and $|Du|(\Omega)$, respectively,
for the variation. (We avoid the notation pV$(u,\Omega)$ since we shall later define the notions of 
$(\veps,p)$-variation, where $p$ denotes the exponent of an $L^p$-space.) 
Remark \ref{notns_of_varn} recalls the relationship between pointwise variation and variation
of one-variable functions. 

Finally, for two sequences of real numbers $(A_n)$ and $(B_n)$, 
we write $A_n\lesssim B_n$ to mean that there is a finite number $C$, independent of $n$,
such that $A_n\leq CB_n$ for all $n\geq 1$. $A_n\sim B_n$ means that both $A_n\lesssim B_n$ 
and $B_n\lesssim A_n$ hold.

\section{Fra{\v{n}}kov{\'a}'s extension of Helly's theorem}\label{frankova_helly_extn}
In this section we fix an open and bounded interval $I=(a,b)\subset\RR$.
A function $u:I\to\RR^m$ is of {\em bounded pointwise variation} provided
\beq\label{var}
	\var u:=\sup\, \sum_{i=1}^k |u(x_i)-u(x_{i-1})|<\infty,
\eeq
where the supremum is over all $k\in\NN$ and all finite selections of points 
$x_0<x_1<\cdots<x_k$ in $I$. We follow \cite{le} and denote the set of such functions by
$\bpv(I)$. We recall Helly's selection theorem (see e.g.\ \cite{le}; recall that $\|\cdot\|$ 
denotes uniform norm):
\begin{theorem}[Helly]\label{helly}
	Assume $(u_n)\subset \mathcal B(I)$ satsfies $\sup_n\|u_n\|<\infty$ and 
	$\sup_n\var u_n<\infty$. Then there is a subsequence $(u_{n(k)})\subset(u_n)$ 
	and a function $u\in \bpv(I)$ such that $u(x)=\lim_{k} u_{n(k)}(x)$ for every $x\in I$.
	Furthermore, $\var u\leq\liminf_{k}\var u_{n(k)}$.
\end{theorem}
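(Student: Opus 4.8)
The plan is to follow the classical three-step strategy for Helly's theorem: reduce to uniformly bounded \emph{monotone} scalar functions, prove the monotone case by a diagonal extraction, and then reassemble and check lower semicontinuity of the variation.

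First I would reduce to the scalar case. Writing $u_n=(u_n^1,\dots,u_n^m)$ in coordinates, equivalence of norms on $\RR^m$ gives $|u_n^j(x)-u_n^j(y)|\lesssim|u_n(x)-u_n(y)|$, so each component sequence $(u_n^j)_n$ again satisfies $\sup_n\|u_n^j\|<\infty$ and $\sup_n\var u_n^j<\infty$. A scalar function of bounded pointwise variation admits a Jordan decomposition $u_n^j=v_n^j-w_n^j$, where $v_n^j(x)$ denotes the variation of $u_n^j$ over $(a,x]$ and $w_n^j:=v_n^j-u_n^j$; both $v_n^j$ and $w_n^j$ are nondecreasing, and the uniform bounds on $\|u_n^j\|$ and $\var u_n^j$ make both families uniformly bounded in the uniform norm. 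This reduces the problem to extracting a pointwise-convergent subsequence from a uniformly bounded sequence of nondecreasing scalar functions.

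For this monotone case I would argue by a Cantor diagonal extraction. Fix a countable dense set $D\subset I$, say $D=\QQ\cap I$. Boundedness together with the Bolzano--Weierstrass theorem, applied successively to the values at the points of $D$, yields a subsequence converging at every point of $D$; the resulting limit is nondecreasing on $D$ and extends to a nondecreasing function $g$ on $I$. Monotonicity forces $g$ to have at most countably many discontinuities, and a squeezing argument, comparing $g(x)$ with limits at points of $D$ on either side of $x$, shows the subsequence already converges to $g(x)$ at every continuity point. A second diagonal extraction over the (countable) set of discontinuities then produces a subsequence converging at every point of $I$.

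Finally I would reassemble. Applying the monotone result in turn to each of the finitely many sequences $(v_n^j)_n$ and $(w_n^j)_n$ gives a single subsequence $(u_{n(k)})$ along which every component $u_{n(k)}^j=v_{n(k)}^j-w_{n(k)}^j$, hence $u_{n(k)}$ itself, converges pointwise everywhere to some $u:I\to\RR^m$. Lower semicontinuity is then immediate: for any finite selection $x_0<x_1<\cdots<x_r$ in $I$,
\[
\sum_{i=1}^r |u(x_i)-u(x_{i-1})|=\lim_{k}\sum_{i=1}^r |u_{n(k)}(x_i)-u_{n(k)}(x_{i-1})|\leq\liminf_{k}\var u_{n(k)},
\]
and taking the supremum over all such selections gives $\var u\leq\liminf_k\var u_{n(k)}$ and, in particular, $u\in\bpv(I)$. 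I expect the main obstacle to be the monotone case --- specifically, upgrading convergence on the dense set $D$ to convergence at \emph{every} point, which forces the separate treatment of the countable discontinuity set through the second diagonal extraction.
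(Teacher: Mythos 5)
The paper does not prove this statement; it simply recalls Helly's selection theorem and cites \cite{le} for a proof. Your argument is the standard classical proof (componentwise reduction, Jordan decomposition into monotone parts, Cantor diagonal extraction over a countable dense set plus a second extraction over the countable discontinuity set, and lower semicontinuity of $\var$ via finite partitions), and it is correct and complete in all essential steps.
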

Next we describe Fra{\v{n}}kov{\'a}'s
extension of Helly's theorem. This requires a few definitions.

\begin{definition}\label{reg}
	A function $u\in\mathcal B(I)$ is {\em regulated}  
	provided its right and left 
	limits exist (as finite numbers) at all points of $I$, it has a finite 
	right limit at the left endpoint, and a finite left limit at the right endpoint. The class of 
	regulated functions on $I$ is denoted $\mathcal R(I)$. 
\end{definition}
\begin{remark}
	We have opted to work on a bounded and open interval $I$ so that the setting 
	is a special case of the multi-variable setting in Section \ref{multi_var_case}.
	In contrast, Fra{\v{n}}kov{\'a} \cite{fr} considers regulated functions on closed 
	intervals $[a,b]$. However, since Definition \ref{reg} requires finite one-sided
	limits at the endpoints, any regulated function in the sense above extends trivially 
	to a regulated function on $[a,b]$ in the sense of \cite{fr}. 
\end{remark}
It is immediate that $\mathcal R(I)$ is a proper subspace of $\mathcal B(I)$.
Fra{\v{n}}kov{\'a} \cite{fr} introduced the following definitions.
\begin{definition}\label{epsilon_varn}
	For $u\in \mathcal B(I)$ and  $\veps>0$, we define the {\em $\veps$-variation} 
	of $u$ by
	\beq\label{eps_varn}
		\evar u:=\inf_{v\in \mathcal U(u;\veps)} \var v,
	\eeq
	where 
	\beq\label{V}
		\mathcal U(u;\veps):=\{v\in\bpv(I)\,|\, \|u-v\|\leq \veps\},
	\eeq
	with the convention that infimum over the empty set is $\infty$. 
\end{definition}
\begin{definition}\label{unif_eps_varns}
	A set of functions $\mathcal F\subset \mathcal R(I)$ has {\em uniformly
	bounded $\veps$-variations} provided
	\[\sup_{u\in\mathcal F}\,\,\evar u<\infty\qquad\text{for every $\veps>0$}.\]
\end{definition}
For $u\in\mathcal B(I)$,  let $J(u)$ denote the jump set of $u$, i.e.,
\[J(u):=\{x\in I\,|\, \text{at least one of $u(x+)$ or $u(x-)$ differs from $u(x)$}\}.\]
(Here $u(x\pm)$ denotes $\lim_{y\to x\pm}u(y)$, respectively.)
A function $u\in \mathcal B(I)$ is a {\em step function} provided there is a finite, increasing 
sequence of points $x_0=a<x_1<\dots<x_{m-1}<x_m=b$ such that $u$ is constant on 
each of the open intervals $(x_i,x_{i+1})$, $i=0,\dots,m$.
The following results about regulated functions are known (for proofs see \cite{di,fr,dn}; (R3) and 
(R4) are Propositions 3.4 and 3.6 in \cite{fr}, respectively):
\medskip 

\begin{enumerate}
	\item[(R1)] If $u\in\mathcal R(I)$, then $J(u)$ is countable.\\
	\item[(R2)] For $u\in \mathcal B(I)$, $u\in\mathcal R(I)$ if and only if 
	it is the uniform limit of step functions on $I$. It follows that 
	$(\mathcal R(I),\|\cdot\|)$ is a proper, closed subspace of $(\mathcal B(I),\|\cdot\|)$.\\
	\item[(R3)] For $u\in \mathcal B(I)$, $u\in\mathcal R(I)$ if and only if 
	$\evar u<\infty$ for every $\veps>0$.\\
	\item[(R4)] Assume $(u_n)\subset \mathcal R(I)$ and $u_n(x)\to u(x)$ for every $x\in I$.
	Then 
	\[\evar u\leq\liminf_n\, \evar u_n\qquad\text{for every $\veps>0$.}\]
	If, in addition, $(u_n)$ has uniformly bounded $\veps$-variations, then
	$u\in\mathcal R(I)$.
\end{enumerate}
\medskip 

\noindent Note that (R3) provides a characterization of regulated functions. 
In Section \ref{multi_var_case} we will use this to motivate the 
notion of $p$-regulated functions of any number of variables. 

Before stating and proving Fra{\v{n}}kov{\'a}'s extension of Helly's theorem, we 
make the following observation.
\begin{observation}\label{obs1}
        Assume $(z_n)\subset\mathcal R(I)$ is bounded and has uniformly bounded $\veps$-variations.
        Then, for each $\veps>0$ there is a finite number $K_\veps$ and a sequence $(z_n^\veps)\subset\bpv(I)$ 
        satisfying
        \[\var z_n^\veps\leq K_\veps,\qquad \|z_n-z_n^\veps\|\leq\veps \qquad\text{for all $n\geq1$}.\]
        It follows that $(z_n^\veps)$ satisfies the assumptions in Helly's theorem; there is 
        therefore a subsequence $(z_{n(k)}^\veps)\subset(z_n^\veps)$ and a $z^\veps\in\bpv(I)$ so that 
        $z^\veps(x)=\lim_k z_{n(k)}^\veps(x)$ for every $x\in I$.
\end{observation}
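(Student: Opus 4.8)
The plan is to extract the approximating sequence $(z_n^\veps)$ straight from the definition of $\veps$-variation and then verify that it meets the two hypotheses of Helly's theorem (Theorem \ref{helly}); no deeper input is needed. First I would fix $\veps>0$ and use the assumption that $(z_n)$ has uniformly bounded $\veps$-variations (Definition \ref{unif_eps_varns}) to define
\[
K_\veps:=1+\sup_n\,\evar z_n,
\]
which is finite. For each $n$, the quantity $\evar z_n$ is, by Definition \ref{epsilon_varn}, the infimum of $\var v$ over $v\in\mathcal U(z_n;\veps)$, so by the defining property of the infimum there is some $z_n^\veps\in\mathcal U(z_n;\veps)$ — that is, $z_n^\veps\in\bpv(I)$ with $\|z_n-z_n^\veps\|\leq\veps$ — for which $\var z_n^\veps\leq\evar z_n+1\leq K_\veps$. (The slack $+1$ is only needed because the infimum need not be attained; any fixed positive tolerance would do.) This produces the desired sequence with $\var z_n^\veps\leq K_\veps$ and $\|z_n-z_n^\veps\|\leq\veps$ for all $n$.

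It remains to check that $(z_n^\veps)$ satisfies the hypotheses of Helly's theorem. The uniform bound on pointwise variation is immediate from the construction, since $\sup_n\var z_n^\veps\leq K_\veps<\infty$. For the uniform bound in $\mathcal B(I)$-norm, the triangle inequality gives $\|z_n^\veps\|\leq\|z_n\|+\|z_n-z_n^\veps\|\leq\|z_n\|+\veps$, and since $(z_n)$ is bounded this yields $\sup_n\|z_n^\veps\|\leq\sup_n\|z_n\|+\veps<\infty$. With both hypotheses verified, Theorem \ref{helly} applies to $(z_n^\veps)$ and produces a subsequence $(z_{n(k)}^\veps)$ together with a limit $z^\veps\in\bpv(I)$ such that $z^\veps(x)=\lim_k z_{n(k)}^\veps(x)$ for every $x\in I$, as claimed.

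There is no genuine obstacle here: the statement is a bookkeeping lemma that repackages the two standing hypotheses into the precise form required by Helly's theorem. The only points demanding a little care are that the approximants must be chosen within a fixed tolerance (because the infimum in the $\veps$-variation is generally not attained), and that both the variation bound $K_\veps$ and the uniform-norm bound must be taken \emph{uniformly} in $n$ — the former supplied by the hypothesis of uniformly bounded $\veps$-variations and the latter by the boundedness of $(z_n)$.
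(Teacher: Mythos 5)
Your proof is correct and is exactly the argument the paper intends (the Observation is stated without a separate proof precisely because it amounts to this bookkeeping): pick near-optimal approximants from $\mathcal U(z_n;\veps)$ using the uniform bound on $\evar z_n$, note the uniform-norm bound via the triangle inequality, and apply Theorem \ref{helly}. Your remark about the $+1$ slack, needed since the infimum in Definition \ref{epsilon_varn} need not be attained, is the right point of care.
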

Fra{\v{n}}kov{\'a}'s extension of Helly's theorem is the following result:
\begin{theorem}[Fra{\v{n}}kov{\'a}]\label{fr_thm}
	Assume $(u_n)\subset\mathcal R(I)$ is bounded and has uniformly bounded 
	$\veps$-variations. Then there is a subsequence $(u_{n(k)})\subset(u_n)$
	and a function $u\in\mathcal R(I)$ such that 
	$u(x)=\lim_k u_{n(k)}(x)$ for every $x\in I$.
\end{theorem}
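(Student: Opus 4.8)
The plan is to mimic Helly's proof via a diagonal argument, using Observation \ref{obs1} as the engine. The point of that observation is that for each fixed $\veps>0$ we can approximate the sequence $(u_n)$ by a sequence $(u_n^\veps)\subset\bpv(I)$ with uniformly bounded pointwise variation and $\|u_n-u_n^\veps\|\leq\veps$; Helly's theorem then extracts a subsequence of $(u_n^\veps)$ converging pointwise everywhere. The strategy is to apply this along a sequence $\veps_j\downarrow 0$ (say $\veps_j=1/j$) and diagonalize so as to obtain a single subsequence of $(u_n)$ that behaves well simultaneously at every scale $\veps_j$.

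First I would set $\veps_j=1/j$ and construct nested subsequences. Starting from $(u_n)$, apply Observation \ref{obs1} with $\veps=\veps_1$ to get a subsequence $(u_{n})_{n\in S_1}$ along which the corresponding approximants $u_n^{\veps_1}$ converge pointwise to some limit. Then, working within $S_1$, apply the observation again with $\veps=\veps_2$ to extract a further subsequence $S_2\subset S_1$ along which the $u_n^{\veps_2}$ converge pointwise, and so on, producing $S_1\supset S_2\supset\cdots$. Taking the diagonal subsequence $(u_{n(k)})$ (with $n(k)$ the $k$-th element of $S_k$) yields a single subsequence of $(u_n)$ such that, for every fixed $j$, the sequence $(u_{n(k)}^{\veps_j})_k$ converges pointwise on all of $I$ as $k\to\infty$.

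Next I would show that the diagonal subsequence $(u_{n(k)})$ itself is pointwise Cauchy at every $x\in I$. Fix $x$ and $\eta>0$; choose $j$ with $\veps_j<\eta/3$. For all large $k,\ell$ (so that both lie in the tail where the $\veps_j$-approximants converge), the triangle inequality gives
\[
|u_{n(k)}(x)-u_{n(\ell)}(x)|\leq |u_{n(k)}(x)-u_{n(k)}^{\veps_j}(x)|+|u_{n(k)}^{\veps_j}(x)-u_{n(\ell)}^{\veps_j}(x)|+|u_{n(\ell)}^{\veps_j}(x)-u_{n(\ell)}(x)|,
\]
where the outer two terms are each at most $\veps_j<\eta/3$ by the uniform approximation bound, and the middle term is $<\eta/3$ for $k,\ell$ large since $(u_{n(k)}^{\veps_j})_k$ converges at $x$. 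Hence $(u_{n(k)}(x))_k$ is Cauchy in $\RR^m$ and converges; define $u(x)$ to be its limit. This gives pointwise everywhere convergence $u_{n(k)}(x)\to u(x)$.

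Finally I would verify $u\in\mathcal R(I)$. Since each $u_{n(k)}$ lies in $\mathcal R(I)\subset\mathcal B(I)$ and the sequence is bounded with uniformly bounded $\veps$-variations, property (R4) applies directly: pointwise convergence $u_{n(k)}\to u$ together with uniformly bounded $\veps$-variations forces $u\in\mathcal R(I)$. (One also checks $u\in\mathcal B(I)$, which is immediate from the uniform bound $\sup_n\|u_n\|<\infty$ passing to the pointwise limit.) The main obstacle I anticipate is organizing the diagonalization cleanly so that the approximants at different scales interact correctly in the triangle-inequality estimate—specifically ensuring the uniform bound $\|u_{n(k)}-u_{n(k)}^{\veps_j}\|\leq\veps_j$ holds along the diagonal subsequence for every $j$ (which it does, since this bound is built into each $u_n^{\veps_j}$ independently of the extraction). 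Everything else is a routine $\veps/3$ argument and an appeal to the already-established properties (R4) and Helly's theorem.
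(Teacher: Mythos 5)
Your proposal is correct and follows essentially the same route as the paper: a diagonal extraction over scales $\veps_j\downarrow 0$ using Observation \ref{obs1}, an $\veps/3$ triangle-inequality estimate, and an appeal to (R4) for $u\in\mathcal R(I)$. The only (harmless) difference is organizational—you show $(u_{n(k)}(x))_k$ is Cauchy directly, whereas the paper first proves the auxiliary Helly limits $v^l$ form a uniform Cauchy sequence, identifies $u$ as their uniform limit, and then verifies $u_{n(k)}(x)\to u(x)$.
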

\begin{remark}\label{f_thm_rmk}
        This is Theorem 3.8 in \cite{fr}, for which Fra{\v{n}}kov{\'a} provided two different proofs. 
        The first of these (outlined on pp.\ 48-49 in \cite{fr}) is relevant to us, and we 
        therefore provide the details of the argument. We also note that Fra{\v{n}}kov{\'a}'s 
        theorem provides a genuine extension of Helly's theorem. I.e., it specializes to 
        Helly's theorem when the conditions in Theorem \ref{helly} are met, and
        it also provides convergence of a subsequence in cases where the original sequence $(u_n)$ is 
        unbounded in $\bpv(I)$; see Example \ref{ex} and Example \ref{2nd_ex}.
\end{remark}

\medskip\noindent
{\it Proof of Theorem \ref{fr_thm}.}  Fix a strictly decreasing sequence $(\veps_l)$
converging to zero. We shall use Observation \ref{obs1} repeatedly, and then employ a 
diagonal argument.

For $l=1$ apply Observation \ref{obs1} to the original sequence $(u_n)$ with $\veps=\veps_1$ 
to get a sequence-subsequence pair $(v_n^{\veps_1})\supset(v_{n_1(k)}^{\veps_1})$ in $\bpv(I)$
and a $v^1\in\bpv(I)$ satisfying 
\[\|u_{n_1(k)}-v_{n_1(k)}^{\veps_1}\|\leq \veps_1\quad \text{for all $k\geq 1$, and}\qquad
v^1(x)=\lim_k v_{n_1(k)}^{\veps_1}(x)\quad \text{for every $x\in I$.}\]
For $l=2$ apply Observation \ref{obs1} to the sequence $(u_{n_1(k)})$ with $\veps=\veps_2$ 
to get a sequence-subsequence pair $(v_{n_1(k)}^{\veps_2})\supset(v_{n_2(k)}^{\veps_2})$ 
in $\bpv(I)$ and a $v^2\in\bpv(I)$ satisfying 
\[\|u_{n_2(k)}-v_{n_2(k)}^{\veps_2}\|\leq \veps_2\quad \text{for all $k\geq 1$, and}\qquad
v^2(x)=\lim_k v_{n_2(k)}^{\veps_2}(x)\quad \text{for every $x\in I$.}\]
Continuing in this manner we obtain for each index $l$ a sequence-subsequence pair
$(v_{n_{l-1}(k)}^{\veps_l})\supset(v_{n_l(k)}^{\veps_l})$ in $\bpv(I)$ and a $v^l\in\bpv(I)$ satisfying
\[\|u_{n_l(k)}-v_{n_l(k)}^{\veps_l}\|\leq\veps_l\quad \text{for all $k\geq 1$, and}\qquad
v^l(x)=\lim_k v_{n_l(k)}^{\veps_l}(x)\quad \text{for every $x\in I$.}\]
Next, for $l\geq 1$ fixed, consider the diagonal index sequence $(n_k(k))_{k\geq l}$,
which is a subsequence of $(n_l(j))_{j\geq 1}$. With $ n(k):=n_k(k)$ we therefore get
that: for each $l\geq 1$, there holds
\beq\label{key1}
	\|u_{ n(k)}-v_{ n(k)}^{\veps_l}\|\leq\veps_l\quad \text{for all $k\geq l$, and}
	\quad \lim_k v_{ n(k)}^{\veps_l}(x)= v^l(x)\quad \text{for every $x\in I$.}
\eeq
We claim that $(v^l)$ is a Cauchy sequence in $(\mathcal B(I),\|\cdot\|)$. Indeed, given $\delta>0$ 
we first choose an index $l(\delta)$ so that $\veps_l\leq\frac{\delta}{2}$ for $l\geq l(\delta)$. Then,
for any $x\in I$, if $l,q\geq l(\delta)$ and $k\geq \max(l,q)$ we get from \eq{key1}${}_1$ that
\begin{align*}
	|v^l(x)-v^q(x)|
	&\leq |v^l(x)-v_{n(k)}^{\veps_l}(x)|+|v_{n(k)}^{\veps_l}(x)-u_{n(k)}(x)|\\
	&\quad+|u_{n(k)}(x)-v_{n(k)}^{\veps_q}(x)|+|v_{n(k)}^{\veps_q}(x)-v^q(x)|\\
	&\leq |v^l(x)-v_{ n(k)}^{\veps_l}(x)|+\|v_{n(k)}^{\veps_l}-u_{n(k)}\|\\
	&\quad+\|u_{n(k)}-v_{n(k)}^{\veps_q}\|+|v_{n(k)}^{\veps_q}(x)-v^q(x)|\\
	&\leq |v^l(x)-v_{n(k)}^{\veps_l}(x)|+\veps_l+\veps_q+|v_{n(k)}^{\veps_q}(x)-v^q(x)|\\
	&\leq  |v^l(x)-v_{n(k)}^{\veps_l}(x)|+\delta +|v_{n(k)}^{\veps_q}(x)-v^q(x)|.
\end{align*}
Sending $k\to\infty$ we get from \eq{key1}${}_2$ that $|v^l(x)-v^q(x)|\leq \delta$. As $x\in I$ is
arbitrary, we conclude that $\|v^l-v^q\|\leq \delta$ whenever $l,q\geq l(\delta)$, establishing the claim.

By completeness of $(\mathcal B(I),\|\cdot\|)$ we thus obtain the existence of a function 
$u\in\mathcal B(I)$ such that $v^l\to u$ uniformly on $I$. 

We claim that $u_{ n(k)}(x)\to u(x)$ for each $x\in I$. To verify this, fix any $x\in I$ and 
any $\delta>0$. Then choose $l$ so large that $\veps_l\leq\frac{\delta}{3}$ and
$\|u-v^l\|\leq\frac{\delta}{3}$. Finally choose $k\geq l$ so large that $|v^l(x)-v_{ n(k)}^{\veps_l}(x)|<\frac{\delta}{3}$
(which is possible according to \eq{key1}${}_2$). Using this together with \eq{key1}${}_1$ we obtain
\begin{align*}
	|u(x)-u_{n(k)}(x)|
	&\leq |u(x)-v^l(x)|+|v^l(x)-v_{n(k)}^{\veps_l}(x)|+|v_{n(k)}^{\veps_l}(x)-u_{n(k)}(x)|\\
	&\leq \|u-v^l\|+|v^l(x)-v_{n(k)}^{\veps_l}(x)|+\veps_l<\delta,
\end{align*}
establishing the claim. Finally, since $(u_{ n(k)})_k$ has uniformly bounded $\veps$-variations, 
we get from the property (R4) above that $u\in\mathcal R(I)$.
\qed

\section{Fra{\v{n}}kov{\'a}'s strategy applied in the multi-variable case}
\label{multi_var_case}
\subsection{Preliminaries}\label{prelims}
For the following background material we refer to \cite{afp,le}.
We fix an open and bounded subset $\Omega\subset\RR^N$, $N\geq 1$.
For convenience, when $N=1$, we assume $\Omega$ is an interval.
In stating compactness results it will be further assumed that $\Omega$ is 
a bounded $BV$ extension domain (cf.\ Definition 3.20 in \cite{afp}).
To simplify the notation we restrict attention to scalar-valued functions $u:\Omega\to \RR$; 
all results generalize routinely to the vector valued case $u:\Omega\to \RR^m$.

A function $u\in L^1(\Omega)$ is of {\em bounded variation} provided
\beq\label{Var}
	\V u:=\sup\Big\{\int_\Omega u\dv \vp\, dx\,|\, 
	\vp\in[C_c^1(\Omega)]^{N}, \|\vp\|_\infty\leq 1 \Big\}<\infty.
\eeq
We shall make repeated use of the fact that $\V$ is lower semi-continuous with respect to
$L^1$-convergence: if $u_n\to u$ in $L^1(\Omega)$, then $\V u\leq\liminf_n \V u_n$; 
cf.\ Remark 3.5 in \cite{afp}.
\begin{remark}\label{notns_of_varn}
	For the one-variable case $N=1$ with $\Omega=(a,b)$, the variation $\V$ relates to the pointwise variation 
	$\var$ in \eq{var} as follows. Defining the {\em essential variation} of $u\in L^1(a,b)$ by
	\[\essen\var u:=\inf\{\var w\,|\, \text{$w$ is a version of $u$ on $(a,b)$}\},\]
	we have (see Section 3.2 in \cite{afp}): every $u\in BV(a,b)$ has 
	a version $\bar u\in\bpv(a,b)$ satisfying
	\beq\label{essvar}
		\var \bar u=\essen\var u=\V u.
	\eeq
\end{remark}
The set $\bv(\Omega)$ of functions of bounded variation is a Banach space when equipped 
with the norm
\[\|u\|_{\bv}:=\|u\|_1+\V u.\]
Furthermore, $\bv(\Omega)$ enjoys the following compactness criterion (cf.\ Theorem 3.23 in \cite{afp}):
\begin{theorem}\label{multi_var_compact}
	Let $\Omega$ be an open and bounded $BV$ extension domain in $\RR^N$ ($N\geq1$), 
	and assume $(u_n)$ is bounded in $(\bv(\Omega),\|\cdot\|_{\bv})$. Then there is a subsequence 
	$(u_{n(k)})\subset(u_n)$ and a function $u\in\bv(\Omega)$ such that $u_{n(k)}\to u$ in $L^1(\Omega)$.
\end{theorem}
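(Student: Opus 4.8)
The plan is to reduce the statement to a compactness criterion on all of $\RR^N$, where translation estimates are available, and then to invoke the Fr\'echet-Kolmogorov (Kolmogorov-Riesz) characterization of relative compactness in $L^1$. First I would use the hypothesis that $\Omega$ is a bounded $BV$ extension domain: there are a bounded open set $\Omega'\supset\overline\Omega$, a constant $C=C(\Omega)$, and a bounded linear extension operator $E\colon \bv(\Omega)\to\bv(\RR^N)$ with $Eu|_\Omega=u$, $\supp(Eu)\subset\Omega'$, and $\|Eu\|_{\bv(\RR^N)}\le C\|u\|_{\bv(\Omega)}$ (cf.\ Definition 3.20 in \cite{afp}). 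Writing $\tilde u_n:=Eu_n$, the sequence $(\tilde u_n)$ is bounded in $\bv(\RR^N)$ and supported in the fixed bounded set $\Omega'$. It suffices to extract a subsequence of $(\tilde u_n)$ converging in $L^1(\RR^N)$, since restriction to $\Omega$ preserves $L^1$-convergence and $\tilde u_n|_\Omega=u_n$.

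The key quantitative input is the control of the $L^1$-modulus of continuity by the variation: for every $w\in\bv(\RR^N)$ and every $h\in\RR^N$,
\[ \int_{\RR^N}|w(x+h)-w(x)|\,dx\le |h|\,\V w. \]
I would establish this first for $w\in C^1_c(\RR^N)$, where the identity $w(x+h)-w(x)=\int_0^1 \nabla w(x+th)\cdot h\,dt$ yields the bound after integrating in $x$ and applying Fubini, and then pass to general $w\in\bv(\RR^N)$ by approximating $w$ with mollifications $w_\delta:=w*\rho_\delta$, using that convolution with a mollifier does not increase $\V$ (immediate from the dual definition \eqref{Var}) and that $w_\delta\to w$ in $L^1$. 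Applied to $w=\tilde u_n$, this gives
\[ \sup_n\int_{\RR^N}|\tilde u_n(x+h)-\tilde u_n(x)|\,dx\le |h|\,\sup_n\V{\tilde u_n}, \]
which tends to $0$ as $h\to0$ since $\sup_n\V{\tilde u_n}<\infty$; i.e.\ the translates of $(\tilde u_n)$ are equicontinuous in $L^1$-mean, uniformly in $n$.

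With this in hand I would verify the three hypotheses of the Kolmogorov-Riesz theorem for the family $\{\tilde u_n\}$: uniform boundedness in $L^1(\RR^N)$ (from $\sup_n\|\tilde u_n\|_{\bv}<\infty$), uniform tightness (from the common support in $\Omega'$), and the uniform $L^1$-equicontinuity just obtained. The theorem then produces a subsequence $(\tilde u_{n(k)})$ converging in $L^1(\RR^N)$ to some limit; setting $u$ equal to its restriction to $\Omega$ gives $u_{n(k)}\to u$ in $L^1(\Omega)$. Finally, since $\V$ is lower semi-continuous with respect to $L^1$-convergence, $\V u\le\liminf_k\V{u_{n(k)}}\le\sup_n\|u_n\|_{\bv}<\infty$, so that $u\in\bv(\Omega)$, completing the argument.

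The main obstacle is the translation estimate together with the correct use of the extension hypothesis. The translation bound is where the variation genuinely enters, and the passage from the smooth case to general $BV$ functions must be carried out so as not to lose the factor $|h|$. Controlling the behavior near $\partial\Omega$ is precisely what forces the $BV$ extension domain assumption: without it, translates of $u_n$ could leak mass across the boundary and destroy the uniform $L^1$-equicontinuity. Once these two ingredients are secured, the verification of the Kolmogorov-Riesz hypotheses and the closing lower-semicontinuity step are routine.
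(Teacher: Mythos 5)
Your proof is correct. Note, however, that the paper does not actually prove Theorem \ref{multi_var_compact}: it is quoted as a known result with a citation to Theorem 3.23 in \cite{afp}, so there is no in-paper argument to compare against. Your route --- extend to $\RR^N$ via the $\bv$ extension operator, prove the translation estimate $\int_{\RR^N}|w(x+h)-w(x)|\,dx\leq |h|\,\V w$ first for $C^1_c$ and then for general $\bv$ by mollification, and close with Kolmogorov--Riesz plus lower semi-continuity of $\V$ --- is the standard one and all the steps you flag as delicate do go through: mollification does not increase $\V$ by the duality in \eq{Var} (since $\|\vp*\check\rho_\delta\|_\infty\leq\|\vp\|_\infty$), and the factor $|h|$ survives the limit $\delta\downarrow 0$ because translation is an $L^1$-isometry, so $\|w_\delta(\cdot+h)-w_\delta\|_1\to\|w(\cdot+h)-w\|_1$. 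The proof in \cite{afp} reaches the same conclusion through smooth approximation and the Rellich-type compactness of bounded sets in $W^{1,1}$ rather than by invoking Kolmogorov--Riesz directly, but the underlying mechanism (the extension operator to handle the boundary, and the variation controlling the $L^1$-modulus of continuity) is the same; your version has the mild advantage of making the equicontinuity-in-mean estimate, which is where $\V$ genuinely enters, completely explicit. One cosmetic remark: the extension operator in Definition 3.20 of \cite{afp} also satisfies $|D(Eu)|(\partial\Omega)=0$, which you do not need; the properties you use (linearity, boundedness, fixed support in a bounded neighborhood $\Omega'$) are exactly those guaranteed for a bounded extension domain.
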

We shall provide extensions of this criterion by applying Fra{\v{n}}kov{\'a}'s strategy for
the proof of Theorem \ref{fr_thm}. The first step is to generalize the notion of
$\veps$-variation to functions of several variables.

\subsection{$(\veps,p)$-variation and $p$-regulated functions}\label{eps_p_varns_p_reg_fncs}
Our main objective is to provide extensions of Theorem \ref{multi_var_compact}
which guarantees $L^1$-convergence of a subsequence.
It is therefore natural to seek a notion of $\veps$-variation for general $L^1$-function.
At the same time we want to add flexibility in how the distance between 
a given function and its $\bv$-approximants is measured. In extending 
Fra{\v{n}}kov{\'a}'s setup to multi-variable functions we therefore replace the uniform norm by
any $L^p$-norm, and generalize Definition \ref{epsilon_varn} as follows:
\begin{definition}\label{eps_p_varn_defn}
	Let $u\in L^1(\Omega)$. For $p\in[1,\infty]$  and $\veps>0$  we define 
	the {\em $(\veps,p)$-variation} of $u$ by
	\beq\label{eps_p_varn}
		\epsp  u:=\inf_{v\in \mathcal V_p(u;\veps)} \V v,
	\eeq
	where 
	\beq\label{multi_var_V}
		\mathcal V_p(u;\veps):=\{v\in\bv(\Omega)\,|\, \|u-v\|_p\leq \veps\},
	\eeq
	with the convention that infimum over the empty set is $\infty$. 
\end{definition}

\begin{remark}
	It might seem more natural to restrict the definition of $\epsp u$ to functions $u\in L^p(\Omega)$.
	However, our primary goal is to provide extensions of Theorem \ref{multi_var_compact},
	and this works out with Definition \ref{eps_p_varn_defn} as stated.
	
	We note that already in the case $N=1$, $\Omega=(a,b)$, and $p=\infty$, i.e., 
	the setting closer to that of Fra{\v{n}}kov{\'a} \cite{fr}, our setup is different 
	from hers. Specifically, we consider $L^1$-functions which are 
	really equivalence classes of functions agreeing almost everywhere, the distance
	between a function and its $BV$-approximants is measured in $L^\infty$, and 
	everywhere pointwise convergence plays no role in the present analysis.
%
\end{remark}

We record some immediate consequences of Definition \ref{eps_p_varn_defn}.
\begin{lemma}\label{monotonicity}
	Let $u\in L^1(\Omega)$. Then, for all $p\in[1,\infty]$ and $\veps>0$ we have:
	\begin{enumerate}
		\item If $\bar u$ is a version of $u$, then
		\[\epsp u=\epsp \bar u.\]
		\item If $u\in\bv(\Omega)$, then 
		\[\epsp  u\leq \V u<\infty.\]
		\item If $0<\veps_0<\veps$, then
		\[\epsnaughtp u\geq \epsp u.\]
\end{enumerate}

\end{lemma}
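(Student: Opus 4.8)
The plan is to prove each of the three items directly from Definition \ref{eps_p_varn_defn}, treating them as essentially formal consequences of the definition of $\epsp u$ as an infimum of $\V v$ over the admissible set $\mathcal V_p(u;\veps)$.

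For part (1), the key observation is that if $\bar u$ is a version of $u$, then $u=\bar u$ almost everywhere, so $\|u-v\|_p=\|\bar u-v\|_p$ for every $v\in\bv(\Omega)$ (the $L^p$-norm only sees equivalence classes). Consequently the admissible sets coincide: $\mathcal V_p(u;\veps)=\mathcal V_p(\bar u;\veps)$. Since $\epsp u$ and $\epsp \bar u$ are infima of the same quantity $\V v$ over the same set, they are equal. This is immediate once one notes the $L^p$-norm is well-defined on equivalence classes.

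For part (2), I would exhibit a single admissible competitor. If $u\in\bv(\Omega)$, take $v=u$ itself. Then $v\in\bv(\Omega)$ and $\|u-v\|_p=0\leq\veps$, so $u\in\mathcal V_p(u;\veps)$; in particular the admissible set is nonempty. Taking the infimum in \eq{eps_p_varn} over a set that contains $u$ gives $\epsp u\leq\V v=\V u$, and $\V u<\infty$ since $u\in\bv(\Omega)$. For part (3), the argument is a monotonicity-of-infimum statement: if $0<\veps_0<\veps$, then any $v$ with $\|u-v\|_p\leq\veps_0$ also satisfies $\|u-v\|_p\leq\veps$, so $\mathcal V_p(u;\veps_0)\subseteq\mathcal V_p(u;\veps)$. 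Taking the infimum of $\V v$ over a smaller set can only make it larger, hence $\epsnaughtp u\geq\epsp u$. One should record the convention that the infimum over the empty set is $\infty$, so that the inequalities in (1) and (3) remain valid (as equalities/inequalities in $[0,\infty]$) even when some admissible set is empty.

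I do not expect a genuine obstacle here; all three parts are soft consequences of the definition. The only point requiring a moment's care is the handling of the $\infty$ convention in parts (1) and (3) when $\mathcal V_p(u;\veps)$ may be empty, and in part (1) the verification that the $L^p$-norm genuinely depends only on the almost-everywhere equivalence class of $u$. Neither is a real difficulty, so the proof is short and essentially bookkeeping.
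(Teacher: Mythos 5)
Your proof is correct, and the paper itself offers no proof at all — it states the lemma as ``immediate consequences of Definition \ref{eps_p_varn_defn}'' — so your bookkeeping (equality of the admissible sets $\mathcal V_p(u;\veps)=\mathcal V_p(\bar u;\veps)$ for versions, $u$ itself as competitor when $u\in\bv(\Omega)$, and nestedness of the admissible sets in $\veps$) is exactly the intended argument. Your care with the empty-set/$\infty$ convention is a sensible extra touch but raises no real issue.
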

Next, motivated by the characterization (R3) of regulated functions of one variable, we 
introduce the class of {\em $p$-regulated} functions:
\begin{definition}\label{p_reg}
	For $u\in L^1(\Omega)$ and $p\in[1,\infty]$, we say that $u$ is {\em $p$-regulated}
	provided $\epsp u<\infty$ for every $\veps>0$. We set
	\[\mathcal R_p(\Omega):=\{u\in L^1(\Omega)\,|\, \text{$u$ is $p$-regulated}\,\}.\]
\end{definition}
Since $\Omega$ is assumed bounded,
we have $\mathcal R_p(\Omega)\subset\mathcal R_q(\Omega)$ whenever 
$1\leq q< p\leq \infty$. (This follows since $\|f\|_q\leq C\|f\|_p$, where $C=C(p,q,\Omega)$.) 
Furthermore, we have:
\begin{lemma}\label{basic_props}
        For any $p\in[1,\infty]$, $\mathcal R_p(\Omega)$ is a subspace of $L^1(\Omega)$ 
        which is closed under $L^p$-convergence. For $p\in[1,\infty)$ we have 
        $L^p(\Omega)\subset\mathcal R_p(\Omega)$; in particular, 
        $\mathcal R_1(\Omega)\equiv L^1(\Omega)$.
\end{lemma}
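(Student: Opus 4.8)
The plan is to check the three assertions separately, each reducing to the triangle inequality for $\|\cdot\|_p$ combined with the homogeneity $\V(\lambda v)=|\lambda|\,\V v$ and subadditivity $\V(v_1+v_2)\leq\V v_1+\V v_2$ of the variation. To see that $\mathcal R_p(\Omega)$ is a subspace, I would first note $0\in\mathcal R_p(\Omega)$ by Lemma \ref{monotonicity}(2), since $\epsp 0\leq\V 0=0$. For scalar multiples, fix $u\in\mathcal R_p(\Omega)$, $\lambda\neq0$, and $\veps>0$; because $u$ is $p$-regulated the set $\mathcal V_p(u;\veps/|\lambda|)$ is nonempty, and any $v$ in it gives $\lambda v\in\bv(\Omega)$ with $\|\lambda u-\lambda v\|_p=|\lambda|\,\|u-v\|_p\leq\veps$, so $\epsp(\lambda u)\leq|\lambda|\,\V v<\infty$. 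For sums, pick $v_1\in\mathcal V_p(u;\veps/2)$ and $v_2\in\mathcal V_p(w;\veps/2)$; then $v_1+v_2\in\mathcal V_p(u+w;\veps)$ and $\epsp(u+w)\leq\V v_1+\V v_2<\infty$.

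Next I would establish closure under $L^p$-convergence. Suppose $(u_n)\subset\mathcal R_p(\Omega)$ and $\|u_n-u\|_p\to0$; since $\Omega$ is bounded, $L^p$-convergence implies $L^1$-convergence, so $u\in L^1(\Omega)$. Given $\veps>0$, choose $n$ with $\|u-u_n\|_p\leq\veps/2$ and, using that $u_n$ is $p$-regulated, pick $v\in\mathcal V_p(u_n;\veps/2)$; the triangle inequality yields $\|u-v\|_p\leq\veps$, hence $v\in\mathcal V_p(u;\veps)$ and $\epsp u\leq\V v<\infty$. Thus $u\in\mathcal R_p(\Omega)$.

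For the inclusion $L^p(\Omega)\subset\mathcal R_p(\Omega)$ when $p\in[1,\infty)$, I would invoke density: given $u\in L^p(\Omega)$ and $\veps>0$, there is $v\in C_c^\infty(\Omega)\subset\bv(\Omega)$ with $\|u-v\|_p\leq\veps$, so $v\in\mathcal V_p(u;\veps)$ and $\epsp u\leq\V v<\infty$ (and $u\in L^1(\Omega)$ automatically, as $\Omega$ is bounded). Taking $p=1$ gives $L^1(\Omega)\subset\mathcal R_1(\Omega)$, which together with the definitional inclusion $\mathcal R_1(\Omega)\subset L^1(\Omega)$ yields $\mathcal R_1(\Omega)\equiv L^1(\Omega)$.

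The only genuinely non-formal step is this last density argument, and it is exactly where the restriction $p<\infty$ enters: $C_c^\infty(\Omega)$ (indeed $\bv(\Omega)$) is not dense in $L^\infty(\Omega)$, so the argument does not carry over to $p=\infty$, and one should not expect $L^\infty(\Omega)\subset\mathcal R_\infty(\Omega)$.
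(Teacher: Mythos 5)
Your proposal is correct and follows essentially the same route as the paper: the same triangle-inequality argument for closure under $L^p$-convergence and the same density of $C_c^\infty(\Omega)$ in $L^p(\Omega)$ for the inclusion $L^p(\Omega)\subset\mathcal R_p(\Omega)$ when $p<\infty$. The only difference is that you spell out the subspace verification (via homogeneity and subadditivity of $\V$), which the paper dismisses as routine; your closing remark on why the argument must fail at $p=\infty$ is also accurate.
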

\begin{proof}
	Using Definition \ref{p_reg} it is routine to verify that $\mathcal R_p(\Omega)$ is closed 
	under addition and scalar multiplication. Next, for $p\in[1,\infty]$, 
	assume $(u_n)\subset\mathcal R_p(\Omega)$ 
	and $\|u_n- u\|_p\to0$. Since $\Omega$ is bounded we then have $\|u_n-u\|_1\to0$; and
	since $(u_n)\subset L^1(\Omega)$ by assumption, we get that $u\in L^1(\Omega)$.
	Fix any $\veps>0$. Choose $n$ so that $\|u-u_n\|_p\leq\frac{\veps}{2}$.
	Since $u_n\in\mathcal R_p(\Omega)$ there is a $v\in\bv(\Omega)$ with 
	$\|u_n-v\|_p\leq\frac{\veps}{2}$, so that $\|u-v\|_p\leq\veps$.
	As $\veps>0$ is arbitrary, this shows that $u\in\mathcal R_p(\Omega)$, establishing 
	closure of $\mathcal R_p(\Omega)$ under $L^p$-convergence.
	
	Next, if $p\in[1,\infty)$, then $C_c^\infty(\Omega)$ is dense in $L^p(\Omega)$.
	Therefore, given $u\in L^p(\Omega)$ and $\veps>0$ there is a 
	$v\in C_c^\infty(\Omega)\subset \bv(\Omega)$ with $\|u-v\|_p\leq\veps$. 
	Thus, $u\in L^1(\Omega)$ and $\epsp u<\infty$ for each $\veps>0$, so that 
	$u\in\mathcal R_p(\Omega)$. This establishes $L^p(\Omega)\subset \mathcal R_p(\Omega)$
	for $p\in[1,\infty)$.  
	Finally, since $\mathcal R_1(\Omega)\subset L^1(\Omega)$ by definition, this gives 
	$\mathcal R_1(\Omega)\equiv L^1(\Omega)$.
\end{proof}
It follows from part (2) of Lemma \ref{monotonicity} that 
$\bv(\Omega)\subset\mathcal R_p(\Omega)$ for all $p\in[1,\infty]$. 
The following example shows that this inclusion is strict in all cases.
\begin{example}\label{ex}
	Let $\Omega\subset\RR^N$ ($N\geq1$) be open and bounded. By translation it is not 
	restrictive to assume that $\Omega$ contains the origin. Fix $r_0>0$ such that 
	$B_{r_0}(0)\subset\subset \Omega$, and a strictly decreasing sequence of radii 
	$(r_k)_{k\geq 1}$ with $r_1<r_0$ and $r_k\downarrow 0$. Also, fix a strictly 
	decreasing sequence $(\beta)_{k\geq0}$ of positive numbers with $\beta_k\downarrow 0$, 
	and define the radial function $u:\Omega\to\RR$ by
	\beq\label{u}
		u(x):=\sum_{k=0}^\infty \beta_k \chi_{I_k}(|x|),
	\eeq
	where $I_k:=[r_{2k+1},r_{2k}]$ and $\chi_A$ denotes the indicator function of a set $A$.
	Then $u\in L^\infty(\Omega)\subset L^1(\Omega)$ and we have 
	(with $\omega_N=$area of unit ball in $\RR^N$ for $N\geq2$, and $\omega_1=1$)
	\[\V u=\omega_N\sum_{k=0}^\infty\beta_k(r_{2k}^{N-1}+r_{2k+1}^{N-1}).\]
	Choosing, for $k\geq 0$,
	\beq\label{r_beta_choices}
		r_k=(\textstyle\frac{1}{k+1})^\frac{1}{2N}r_0 \qquad\text{and}
		\qquad \beta_k=(\textstyle\frac{1}{k+1})^\frac{1}{2},
	\eeq
	gives
	\[\V u\gtrsim\sum_{k=0}^\infty\beta_k r_{2k+1}^{N-1}
	\gtrsim\sum_{k=0}^\infty\textstyle\frac{1}{k+1}=\infty,\]
	so that $u\notin \bv(\Omega)$. To show that $u\in\mathcal R_p(\Omega)$ 
	for all $p\in[1,\infty]$, it suffices to verify that $u\in\mathcal R_\infty(\Omega)$ 
	(see comment before Lemma \ref{basic_props}). For this define
	\beq\label{u_n}
		u_n(x):=\sum_{k=0}^n \beta_k \chi_{I_k}(r),
	\eeq
	and let, for $\veps>0$, $n_\veps$ be the smallest integer such that $\beta_{n_\veps}\leq\veps$.
	Then
	\[u_{n_\veps}\in\bv(\Omega)\qquad\text{and}\qquad \|u-u_{n_\veps}\|_\infty\leq\veps,\]
	showing that $\epsinf u<\infty$. As $\veps>0$ is arbitrary, we have $u\in\mathcal R_\infty(\Omega)$.
\end{example}

Next we generalize Definition \ref{unif_eps_varns} to the setting of $(\veps,p)$-variation.
\begin{definition}\label{multi_var_unif_eps_varns}
	Let $p\in[1,\infty]$. A set of functions $\mathcal F\subset \mathcal R_p(\Omega)$ has {\em uniformly
	bounded $(\veps,p)$-variations} provided
	\[\sup_{u\in\mathcal F}\,\,\epsp u<\infty\qquad\text{for every $\veps>0$}.\]
\end{definition}
The following example builds on Example \ref{ex} and shows that for any $p\in[1,\infty]$
there are sequences with uniformly bounded $(\veps,p)$-variations that are unbounded 
in $\bv(\Omega)$.
\begin{example}\label{2nd_ex}
	Fix $p\in[1,\infty]$ and consider the sequence $(u_n)$ defined in \eq{u_n}.
	Note that $u_n\in\bv(\Omega)$ for all $n$ and that $\V u_n$ increases monotonically 
	without bound as $n\uparrow\infty$ (cf.\ Example \ref{ex}). We have
	\[\|u-u_n\|_p\lesssim\|u-u_n\|_\infty\to 0,\]
	and $\|u-u_n\|_p$ decreases monotonically to $0$ as $n\uparrow\infty$.
	For given $\veps>0$ let $m_\veps$ be the smallest integer $m$ such that 
	$\|u-u_m\|_p\leq\veps$. For $n\leq m_\veps$ we have $\V u_n\leq\V u_{m_\veps}<\infty$,
	so that 
	\[\epsp u_n\leq \V u_n\leq\V u_{m_\veps},\qquad n\leq m_\veps. \]
	On the other hand, for $n> m_\veps$ we have
	\[\|u_n-u_{m_\veps}\|_p<\|u-u_{m_\veps}\|_p\leq\veps,\]
	showing that $\epsp u_n\leq\V u_{m_\veps}$ holds also in this case.
	It follows that 
	\[\sup_n\,\,\epsp u_n\leq \V u_{m_\veps}<\infty. \]
	As $\veps>0$ is arbitrary, this shows that $(u_n)$ has uniformly bounded $(\veps,p)$-variations
	for any $p\in[1,\infty]$.
\end{example}

\subsection{Multi-variable Fra{\v{n}}kov{\'a} theorem}\label{gen'zd_Frankova}
In this subsection we formulate and prove our main result on extensions of the 
$\bv$-compactness criterion in Theorem \ref{multi_var_compact}. 
We first record the following observation, 
cf.\ Observation \ref{obs1} in Section \ref{frankova_helly_extn}.
\begin{observation}\label{obs2}
        Let $\Omega$ be an open and bounded $\bv$ extension domain in $\RR^N$ ($N\geq1$)
        and $p\in[1,\infty]$. Assume $(z_n)\subset\mathcal R_p(\Omega)$ is bounded in $L^1(\Omega)$ 
        and has uniformly bounded $(\veps,p)$-variations.
        Then, for each $\veps>0$ there is a finite number $K_\veps$ and a 
        sequence $(z_n^\veps)\subset\bv(\Omega)$ satisfying
        \[\V z_n^\veps\leq K_\veps,\qquad \|z_n-z_n^\veps\|_p\leq\veps \qquad\text{for all $n\geq1$}.\]
        Since $\Omega$ is assumed bounded, there is a constant $C=C(\Omega,p)$ so that
        \[\|z_n^\veps\|_1\leq \|z_n^\veps-z_n\|_1+\|z_n\|_1\leq C\|z_n^\veps-z_n\|_p+\|z_n\|_1\leq C\veps+\|z_n\|_1.\]
        It follows that, for each fixed $\veps$, the sequence $(z_n^\veps)$ 
        meets the assumptions in Theorem \ref{multi_var_compact}. Thus, for each $\veps>0$, there is 
        a subsequence $(z_{n(k)}^\veps)\subset(z_n^\veps)$ and a $z^\veps\in\bv(\Omega)$ with
        $z_{n(k)}^\veps\to z^\veps$ in $L^1(\Omega)$.
\end{observation}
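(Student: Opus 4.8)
The plan is to unpack Definition \ref{eps_p_varn_defn} to produce the approximating sequence $(z_n^\veps)$, and then combine the resulting variation bound with an $L^1$-bound to verify the hypotheses of Theorem \ref{multi_var_compact}.

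First I would fix $\veps>0$ and set $M_\veps:=\sup_n\epsp z_n$, which is finite by the assumption that $(z_n)$ has uniformly bounded $(\veps,p)$-variations. Since each $z_n$ lies in $\mathcal R_p(\Omega)$ we have $\epsp z_n<\infty$, so by the convention that an infimum over the empty set equals $\infty$, the constraint set $\mathcal V_p(z_n;\veps)$ is nonempty. Reading $\epsp z_n$ as the infimum of $\V v$ over $v\in\mathcal V_p(z_n;\veps)$, I would then choose for each $n$ a near-minimizer $z_n^\veps\in\mathcal V_p(z_n;\veps)$ with
\[
\V z_n^\veps\leq\epsp z_n+1\leq M_\veps+1=:K_\veps.
\]
Because $z_n^\veps$ lies in $\mathcal V_p(z_n;\veps)$ by construction, the bound $\|z_n-z_n^\veps\|_p\leq\veps$ holds automatically. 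This yields the sequence $(z_n^\veps)\subset\bv(\Omega)$ together with the uniform variation bound $K_\veps$ asserted in the statement.

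Next I would upgrade this to a uniform $\bv$-bound. The displayed estimate in the statement, which uses $\|f\|_1\leq C\|f\|_p$ on the bounded domain $\Omega$ (cf.\ the comment preceding Lemma \ref{basic_props}), gives $\|z_n^\veps\|_1\leq C\veps+\|z_n\|_1$; since $(z_n)$ is bounded in $L^1(\Omega)$ this forces $\sup_n\|z_n^\veps\|_1<\infty$. Combined with $\V z_n^\veps\leq K_\veps$, it follows that for the fixed $\veps$ the sequence $(z_n^\veps)_n$ is bounded in $(\bv(\Omega),\|\cdot\|_{\bv})$. As $\Omega$ is an open and bounded $\bv$ extension domain, Theorem \ref{multi_var_compact} then supplies a subsequence $(z_{n(k)}^\veps)\subset(z_n^\veps)$ and a limit $z^\veps\in\bv(\Omega)$ with $z_{n(k)}^\veps\to z^\veps$ in $L^1(\Omega)$.

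The argument is essentially a direct application of the definitions together with Theorem \ref{multi_var_compact}, so there is no serious obstacle. The one point needing care is the selection step: I would deliberately take a near-minimizer rather than an exact minimizer of the $(\veps,p)$-variation, so that the reasoning is valid uniformly for every $p\in[1,\infty]$ without invoking attainment of the infimum, which the paper establishes only in the cases $p=1$ and $p=\infty$.
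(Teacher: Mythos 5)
Your proposal is correct and follows essentially the same route the paper intends for this observation: extract near-minimizers from the definition of $(\veps,p)$-variation to get the uniform variation bound $K_\veps$, combine with the $L^1$-bound on the bounded domain to get boundedness in $\bv(\Omega)$, and apply Theorem \ref{multi_var_compact}. Your explicit use of near-minimizers (avoiding any appeal to attainment of the infimum, which the paper only establishes later for $p=1$ and $p=\infty$) is exactly the right precaution.
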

We now have:
\begin{theorem}\label{multi_var_frankova_any_p}
	Let $\Omega$ be an open and bounded $\bv$ extension domain in $\RR^N$ ($N\geq1$), and let 
	$p\in[1,\infty]$. Assume the sequence $(u_n)$ 
	is bounded in $L^1(\Omega)$ and has uniformly bounded $(\veps,p)$-variations.
	Then there is a subsequence $(u_{n(k)})\subset (u_n)$ and a function $u\in L^1(\Omega)$ 
	such that $u_{n(k)}\to u$ in $L^1(\Omega)$.
\end{theorem}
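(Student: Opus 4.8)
The plan is to mimic the proof of Theorem \ref{fr_thm} precisely, substituting Observation \ref{obs2} for Observation \ref{obs1}, $L^1$-convergence for pointwise convergence, and the $L^p$-norm for the uniform norm. First I would fix a strictly decreasing sequence $(\veps_l)$ with $\veps_l\downarrow 0$. Applying Observation \ref{obs2} repeatedly—at stage $l$ to the subsequence surviving from stage $l-1$, with $\veps=\veps_l$—together with a diagonal extraction, I obtain a single index sequence $n(k)$ and, for each $l\geq 1$, a function $v^l\in\bv(\Omega)$ satisfying
\[
\|u_{n(k)}-v_{n(k)}^{\veps_l}\|_p\leq\veps_l\ \text{ for all }k\geq l,\qquad
v_{n(k)}^{\veps_l}\to v^l\ \text{ in }L^1(\Omega)\ \text{ as }k\to\infty.
\]
This is the structural backbone, identical to the one in the proof of Theorem \ref{fr_thm}.

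Next I would show that $(v^l)$ is Cauchy in $L^1(\Omega)$, which replaces the uniform-Cauchy argument in Fra\v{n}kov\'a's proof. The subtlety here is that the bound $\|u_{n(k)}-v_{n(k)}^{\veps_l}\|_p\leq\veps_l$ is in the $L^p$-norm, whereas I want convergence in $L^1$. Since $\Omega$ is bounded, there is a constant $C=C(\Omega,p)$ with $\|f\|_1\leq C\|f\|_p$, so these $L^p$-bounds transfer to $L^1$-bounds at the cost of the harmless factor $C$. Given $\delta>0$, I choose $l(\delta)$ so that $C\veps_l\leq\delta/2$ for $l\geq l(\delta)$; then for $l,q\geq l(\delta)$ and $k\geq\max(l,q)$ the triangle inequality in $L^1$ gives
\[
\|v^l-v^q\|_1\leq \|v^l-v_{n(k)}^{\veps_l}\|_1
+C\|v_{n(k)}^{\veps_l}-u_{n(k)}\|_p
+C\|u_{n(k)}-v_{n(k)}^{\veps_q}\|_p
+\|v_{n(k)}^{\veps_q}-v^q\|_1.
\]
The two middle terms are $\leq C\veps_l+C\veps_q\leq\delta$, and letting $k\to\infty$ the two outer terms vanish by the $L^1$-convergence $v_{n(k)}^{\veps_l}\to v^l$. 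Hence $\|v^l-v^q\|_1\leq\delta$, so $(v^l)$ is Cauchy in the complete space $L^1(\Omega)$ and converges to some $u\in L^1(\Omega)$.

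Finally I would verify that $u_{n(k)}\to u$ in $L^1(\Omega)$. Fixing $\delta>0$, choose $l$ so large that $C\veps_l\leq\delta/3$ and $\|u-v^l\|_1\leq\delta/3$; then choose $k\geq l$ so large that $\|v^l-v_{n(k)}^{\veps_l}\|_1\leq\delta/3$. The triangle inequality yields
\[
\|u-u_{n(k)}\|_1\leq\|u-v^l\|_1+\|v^l-v_{n(k)}^{\veps_l}\|_1+C\|v_{n(k)}^{\veps_l}-u_{n(k)}\|_p\leq\delta,
\]
which establishes the claimed $L^1$-convergence. I do \emph{not} need to argue that the limit $u$ is $p$-regulated, since the statement only asserts $u\in L^1(\Omega)$, so unlike the one-variable case there is no appeal to an analogue of (R4); this simplifies the endgame. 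The main obstacle, such as it is, lies in being careful about the $L^p$-versus-$L^1$ discrepancy: the approximation bounds live in $L^p$ while the target convergence is in $L^1$, and the whole argument hinges on the boundedness of $\Omega$ to bridge the two via $\|\cdot\|_1\leq C\|\cdot\|_p$. Everything else is a routine transcription of Fra\v{n}kov\'a's diagonal argument into the $L^1$ setting.
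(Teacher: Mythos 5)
Your proposal is correct and follows essentially the same route as the paper's proof: the same diagonal extraction via Observation \ref{obs2}, the same $L^1$-Cauchy argument for $(v^l)$ using $\|\cdot\|_1\leq C(\Omega,p)\|\cdot\|_p$, and the same three-term estimate to conclude $u_{n(k)}\to u$ in $L^1(\Omega)$. Your observation that no analogue of (R4) is needed at the end also matches the paper, which defers the $p$-regularity of the limit to a separate remark.
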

\begin{remark}
	Note that it is not claimed that the limit function $u$ belongs to 
	$\mathcal R_p(\Omega)$. However, this does hold when $p=1$ or $p=\infty$. 
	For $p=1$ this is immediate since $u\in L^1(\Omega)\equiv \mathcal R_1(\Omega)$ 
	(Lemma \ref{basic_props}). Furthermore, lower semi-continuity of $\epsone\!$ with respect to 
	$L^1$-convergence (Proposition \ref{lsc_p=1} below) yields 
	\[\epsone u\leq\liminf_k \,\,\epsone u_{n(k)}\qquad\text{for each $\veps>0$.}\] 
	When $p=\infty$ we again have lower semicontinuity of $\epsinf\!$ with respect to 
	$L^1$-convergence (Proposition \ref{lsc_p=infty} below). Thus, also for the case $p=\infty$ we have
	\[\epsinf u\leq\liminf_k \,\,\epsinf u_{n(k)}\qquad\text{for each $\veps>0$.}\] 
	As the sequence $(u_n)$ is assumed to have uniformly bounded $(\veps,\infty)$-variations, it follows that 
	$\epsinf u<\infty$ for each $\veps>0$, i.e., $u\in\mathcal R_\infty(\Omega)$.  
\end{remark}
\medskip\noindent
{\it Proof of Theorem \ref{multi_var_frankova_any_p}.} 
Fix a strictly decreasing sequence $(\veps_l)$ converging to zero. 
For $l=1$ apply Observation \ref{obs2} to the original sequence $(u_n)$ with $\veps=\veps_1$ 
to get a sequence-subsequence pair $(v_n^{\veps_1})\supset(v_{n_1(k)}^{\veps_1})$ 
in $\bv(\Omega)$ and a $v^1\in\bv(\Omega)$ satisfying 
\[\|u_{n_1(k)}-v_{n_1(k)}^{\veps_1}\|_p\leq \veps_1\quad \text{for all $k\geq 1$, and}\qquad
v_{n_1(k)}^{\veps_1}\overset{k}{\to} v^1\quad \text{in $L^1(\Omega)$.}\]
For $l=2$ apply Observation \ref{obs2} to the sequence $(u_{n_1(k)})$ with $\veps=\veps_2$ 
to get a sequence-subsequence pair $(v_{n_1(k)}^{\veps_2})\supset(v_{n_2(k)}^{\veps_2})$ 
in $\bv(\Omega)$ and a $v^2\in\bv(\Omega)$ satisfying 
\[\|u_{n_2(k)}-v_{n_2(k)}^{\veps_2}\|_p\leq \veps_2\quad \text{for all $k\geq 1$, and}\qquad
v_{n_2(k)}^{\veps_2}\overset{k}{\to}v^2\quad \text{in $L^1(\Omega)$.}\]
Continuing in this manner we obtain for each index $l$ a sequence-subsequence pair
$(v_{n_{l-1}(k)}^{\veps_l})\supset(v_{n_l(k)}^{\veps_l})$ in $\bv(\Omega)$ and a $v^l\in\bv(\Omega)$ 
satisfying
\[\|u_{n_l(k)}-v_{n_l(k)}^{\veps_l}\|_p\leq\veps_l\quad \text{for all $k\geq 1$, and}\qquad
v_{n_l(k)}^{\veps_l}\overset{k}{\to}v^l\quad \text{in $L^1(\Omega)$.}\]
Next, for $l\geq 1$ fixed, consider the diagonal index sequence $(n_k(k))_{k\geq l}$,
which is a subsequence of $(n_l(j))_{j\geq 1}$. With $ n(k):=n_k(k)$ we therefore get
that: for each $l\geq 1$, there holds
\beq\label{key2}
	\|u_{ n(k)}-v_{ n(k)}^{\veps_l}\|_p\leq\veps_l\quad \text{for all $k\geq l$, and}\quad 
	v_{ n(k)}^{\veps_l}\overset{k}{\to} v^l\quad \text{in $L^1(\Omega)$.}
\eeq
We claim that $(v^l)$ is a Cauchy sequence in $L^1(\Omega)$. To show this let 
$C=C(\Omega,p)$ be a constant such that $\|f\|_1\leq C\|f\|_p$ for all $f\in L^p(\Omega)$.
Then, for any $\delta>0$ 
we first choose an index $l(\delta)$ so that $\veps_l\leq\frac{\delta}{2C}$ for $l\geq l(\delta)$. 
Then, for $l,q\geq l(\delta)$ and  any $k\geq \max(l,q)$, \eq{key2}${}_1$ gives that
\begin{align*}
	\|v^l-v^q\|_1
	&\leq \|v^l-v_{ n(k)}^{\veps_l}\|_1+\|v_{n(k)}^{\veps_l}-u_{n(k)}\|_1
	+\|u_{n(k)}-v_{n(k)}^{\veps_q}\|_1+\|v_{n(k)}^{\veps_q}-v^q\|_1\\
	&\leq \|v^l-v_{ n(k)}^{\veps_l}\|_1+C\veps_l+C\veps_q+\|v_{n(k)}^{\veps_q}-v^q\|_1\\
	&\leq \|v^l-v_{ n(k)}^{\veps_l}\|_1+\delta +\|v_{n(k)}^{\veps_q}-v^q\|_1.
\end{align*}
Sending $k\to\infty$ we get from \eq{key2}${}_2$ that $\|v^l-v^q\|_1\leq \delta$ 
whenever $l,q\geq l(\delta)$, establishing the claim.

By completeness of $L^1(\Omega)$ we thus obtain the existence of a function 
$u\in L^1(\Omega)$ such that $v^l\to u$ in $L^1(\Omega)$. 
We claim that $u_{ n(k)}\to u$ in $L^1(\Omega)$. To verify this, fix any $\delta>0$ and choose $l$
so large that $\veps_l\leq\frac{\delta}{3C}$ (with $C$ as above) and $\|u-v^l\|_1\leq\frac{\delta}{3}$. According to 
\eq{key2}${}_1$ we therefore have, for any $k\geq l$, that
\begin{align*}
	\|u-u_{n(k)}\|_1
	&\leq \|u-v^l\|_1+\|v^l-v_{n(k)}^{\veps_l}\|_1+\|v_{n(k)}^{\veps_l}-u_{n(k)}\|_1\\
	&\leq \textstyle\frac{\delta}{3}+\|v^l-v_{n(k)}^{\veps_l}\|_1+C\|v_{n(k)}^{\veps_l}-u_{n(k)}\|_p\\
	&
	\leq\textstyle\frac{2\delta}{3}+\|v^l-v_{n(k)}^{\veps_l}\|_1.
\end{align*}
Finally choose $k\geq l$ so large that $\|v^l-v_{n(k)}^{\veps_l}\|_1<\frac{\delta}{3}$
(possible according to \eq{key2}${}_2$), giving $\|u-u_{n(k)}\|_1\leq\delta$. 
This establishes the claim, and concludes the proof of Theorem \ref{multi_var_frankova_any_p}. 
\qed
\medskip

In the following two subsections we provide additional results for $p=1$ and $p=\infty$.

\subsection{The case $p=1$}\label{multi_var_case_p=1}
We shall establish attainment of $(\veps,1)$-variation, right-continuity with respect to 
$\veps$, and lower semi-continuity with respect to $L^1$-convergence. The arguments 
make use of Theorem \ref{multi_var_compact} and thus require that $\Omega$ is
a $\bv$ extension domain. 
\begin{proposition}\label{attained_1}
	Let $\Omega\subset\RR^N$ ($N\geq1$) be an open and bounded $\bv$ extension domain 
	and assume $u\in L^1(\Omega)$ and $\veps>0$. Then there is a function
	$v\in \bv(\Omega)$ with $\|u-v\|_1\leq\veps$ and $\V v=\epsone u$.
\end{proposition}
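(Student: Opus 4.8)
The plan is to prove this by the \emph{direct method in the calculus of variations}: take a minimizing sequence for the infimum defining $\epsone u$, use the $\bv$-compactness theorem to extract an $L^1$-convergent subsequence, and then check that the limit both satisfies the constraint and attains the infimum. First I would set $\alpha := \epsone u$ and note that $\alpha < \infty$: since $\Omega$ is bounded, Lemma \ref{basic_props} gives $\mathcal R_1(\Omega) \equiv L^1(\Omega)$, so $u$ is $1$-regulated and the admissible set $\mathcal V_1(u;\veps) = \{v \in \bv(\Omega) : \|u - v\|_1 \le \veps\}$ is nonempty. I then choose $(v_n) \subset \mathcal V_1(u;\veps)$ with $\V v_n \to \alpha$.

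Next I would verify that $(v_n)$ is bounded in $\bv$-norm so that Theorem \ref{multi_var_compact} applies. The variations $\V v_n$ are bounded since they converge to the finite number $\alpha$, and the $L^1$-norms are controlled by the constraint: $\|v_n\|_1 \le \|v_n - u\|_1 + \|u\|_1 \le \veps + \|u\|_1$. Hence $\sup_n \|v_n\|_{\bv} < \infty$, and since $\Omega$ is a bounded $\bv$ extension domain, Theorem \ref{multi_var_compact} yields a subsequence $(v_{n(k)})$ and a function $v \in \bv(\Omega)$ with $v_{n(k)} \to v$ in $L^1(\Omega)$.

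Finally I would confirm that $v$ is a minimizer. Membership in the constraint set follows from the triangle inequality $\|u - v\|_1 \le \|u - v_{n(k)}\|_1 + \|v_{n(k)} - v\|_1 \le \veps + \|v_{n(k)} - v\|_1$ upon letting $k \to \infty$; thus $v \in \mathcal V_1(u;\veps)$, whence $\V v \ge \alpha$ by definition of the infimum. The reverse inequality comes from lower semi-continuity of $\V$ under $L^1$-convergence (recorded just after \eqref{Var}): $\V v \le \liminf_k \V v_{n(k)} = \alpha$. Combining the two gives $\V v = \alpha = \epsone u$, as required.

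The step I expect to carry the real content is the compatibility of the convergence with the constraint: Theorem \ref{multi_var_compact} provides only $L^1$-convergence of the minimizing sequence, so the argument works precisely because, for $p = 1$, the admissible set $\mathcal V_1(u;\veps)$ is closed under $L^1$-convergence. This is exactly why attainment is established here in the case $p = 1$; for general $p$ the $L^1$-limit need not respect an $L^p$-distance constraint, and a separate argument would be needed.
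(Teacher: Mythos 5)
Your proposal is correct and takes essentially the same route as the paper's proof: extract a minimizing sequence, bound it in $\bv(\Omega)$ via the constraint, apply Theorem \ref{multi_var_compact}, use lower semi-continuity of $\V$ under $L^1$-convergence for one inequality, and the triangle inequality to show the limit stays in $\mathcal V_1(u;\veps)$ for the other. Your closing observation about why the argument is tied to $p=1$ is also consistent with the paper, which handles $p=\infty$ separately by passing to an a.e.\ convergent subsequence.
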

\begin{proof}
	Recall from Lemma \ref{basic_props} that $\mathcal R_1(\Omega)=L^1(\Omega)$, 
	so that $\epsone u<\infty$. According to Definition \ref{eps_p_varn_defn} 
	there is a sequence $(v_k)\subset \bv(\Omega)$ with $\|v_k-u\|_1\leq\veps$ 
	for all $k\geq 1$ and such that
	\beq\label{v_k_2}
		\epsone u=\lim_k \V v_k.
	\eeq
	Also, for all $k\geq1$,
	\[\|v_k\|_1\leq \|v_k-u\|_1+\|u\|_1\leq\veps+\|u\|_1.\]
	Thus $(v_k)$ is bounded in $\bv(\Omega)$, and Theorem \ref{multi_var_compact} 
	gives a subsequence $(v_{k(j)})\subset (v_k)$ and a $v\in\bv(\Omega)$ so that 
	\beq\label{v_k_3}
		v_{k(j)}\to v\quad\text{in $L^1(\Omega)$.}
	\eeq
	According to lower semi-continuity of $\V$ with respect to $L^1$-convergence 
	we have $\V v\leq\liminf_j \V v_{k(j)}$.
	Together with \eq{v_k_2} this gives
	\beq\label{v_k_4}
		\V v\leq\liminf_j \V v_{k(j)}=\lim_j \V v_{k(j)}=\epsone u.
	\eeq
	On the other hand, we have for any $j$ that
	\[\|u-v\|_1\leq \|u-v_{k(j)}\|_1+\|v_{k(j)}-v\|_1\leq\veps+\|v_{k(j)}-v\|_1.\]
	By sending $j\to\infty$ and using \eq{v_k_3} we therefore get $\|u-v\|_1\leq\veps$.
	Definition \ref{eps_p_varn_defn} therefore gives $\epsone u\leq \V v,$ showing that
	$\epsone u= \V v$.
\end{proof}
We next apply this last result to establish right-continuity with respect to $\veps$.
\begin{proposition}\label{right_cont}
	Let $\Omega\subset\RR^N$ ($N\geq1$) be an open and bounded $\bv$ extension 
	domain and assume $u\in L^1(\Omega)$ and $\veps_0>0$. Then
	\beq\label{r_cont}
		\lim_{\veps\downarrow\veps_0} \,\,\epsone u=\epsnaughtone u.
	\eeq
\end{proposition}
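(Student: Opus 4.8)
The plan is to reduce the claim to a single inequality by exploiting monotonicity of $\epsone u$ in $\veps$, and then to prove that inequality by a direct-method argument combining the attainment result of Proposition \ref{attained_1}, the compactness criterion of Theorem \ref{multi_var_compact}, and lower semi-continuity of $\V$ under $L^1$-convergence.

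First I would record that, by part (3) of Lemma \ref{monotonicity}, the map $\veps\mapsto\epsone u$ is nonincreasing. Hence, as $\veps\downarrow\veps_0$, the quantity $\epsone u$ increases monotonically while remaining bounded above by $\epsnaughtone u$; consequently the limit $L:=\lim_{\veps\downarrow\veps_0}\epsone u=\sup_{\veps>\veps_0}\epsone u$ exists and satisfies $L\leq\epsnaughtone u$. Since $u\in L^1(\Omega)=\mathcal R_1(\Omega)$ by Lemma \ref{basic_props}, we have $\epsnaughtone u<\infty$, so in particular $L<\infty$. It therefore remains only to prove the reverse inequality $\epsnaughtone u\leq L$.

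For the reverse inequality I would fix a sequence $\veps_n\downarrow\veps_0$ with each $\veps_n>\veps_0$, and for each $n$ invoke Proposition \ref{attained_1} (with $\veps=\veps_n$) to obtain a minimizer $v_n\in\bv(\Omega)$ satisfying $\|u-v_n\|_1\leq\veps_n$ and $\V v_n=(\veps_n,1)\text{-}\mathrm{Var}\, u\leq L$, the last bound holding because $\veps_n>\veps_0$. The estimate $\|v_n\|_1\leq\|v_n-u\|_1+\|u\|_1\leq\veps_1+\|u\|_1$ then shows that $(v_n)$ is bounded in $\bv(\Omega)$, so Theorem \ref{multi_var_compact} furnishes a subsequence $v_{n(k)}\to v$ in $L^1(\Omega)$ with $v\in\bv(\Omega)$.

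Finally I would check that $v$ is admissible for $\epsnaughtone u$ and control its variation. Passing to the $L^1$-limit in $\|u-v_{n(k)}\|_1\leq\veps_{n(k)}$ (using $\|u-v\|_1\leq\|u-v_{n(k)}\|_1+\|v_{n(k)}-v\|_1$ and $\veps_{n(k)}\to\veps_0$) gives $\|u-v\|_1\leq\veps_0$, so that $v\in\mathcal V_1(u;\veps_0)$ and hence $\epsnaughtone u\leq\V v$ by Definition \ref{eps_p_varn_defn}. On the other hand, lower semi-continuity of $\V$ with respect to $L^1$-convergence yields $\V v\leq\liminf_k\V v_{n(k)}\leq L$. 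Combining these gives $\epsnaughtone u\leq L$, which together with the first paragraph establishes \eq{r_cont}. The only delicate point is the last one: one must ensure that the limit competitor $v$ respects the tighter constraint $\|u-v\|_1\leq\veps_0$ rather than merely $\|u-v\|_1\leq\veps_n$, and this is exactly where the convergence $\veps_n\to\veps_0$ is used in conjunction with $L^1$-convergence; the rest is the standard attainment--compactness--lower-semicontinuity pattern.
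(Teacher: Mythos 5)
Your argument is correct and follows essentially the same route as the paper: monotonicity reduces the claim to one inequality, and that inequality is obtained by the attainment result (Proposition \ref{attained_1}), $\bv$-compactness (Theorem \ref{multi_var_compact}), lower semi-continuity of $\V$, and passage to the limit in the constraint $\|u-v_n\|_1\leq\veps_n$ to land in $\mathcal V_1(u;\veps_0)$. The only cosmetic difference is that you fix a sequence $\veps_n\downarrow\veps_0$ at the outset and pass to a subsequence, whereas the paper extracts the sequence from the bounded family $\{v^\veps\,|\,\veps>\veps_0\}$; the substance is identical.
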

\begin{proof}
	Since $L^1(\Omega)=\mathcal R_1(\Omega)$ the 
	right-hand side of \eq{r_cont} is finite. Also, according to part (3) of
	Lemma \ref{monotonicity}, we have $\epsone u\leq \epsnaughtone u$
	whenever $\veps>\veps_0$. Denoting the limit on the left-hand side of 
	\eq{r_cont} by $L_0$, it follows that $L_0$ exists as a finite number 
	satisfying
	\beq\label{one_way}
		L_0\leq \epsnaughtone u.
	\eeq
	For the opposite inequality we use Proposition \ref{attained_1} to select, 
	for each $\veps>\veps_0$, a function $v^\veps\in\bv(\Omega)$ with
	\[\|u-v^\veps\|_1\leq\veps\qquad\text{and}\qquad \V v^\veps=\epsone u\leq \epsnaughtone u<\infty.\]
	Since,
	\[\|v^\veps\|_1\leq\|v^\veps-u\|_1+\|u\|_1\leq \veps+\|u\|_1<\infty,\]
	we have that $\{v^\veps\,|\, \veps>\veps_0\}$ is bounded in $\bv(\Omega)$.
	It follows from Theorem \ref{multi_var_compact} that there is a sequence $(\veps_n)_{n\geq1}$
	with $\veps_n\downarrow \veps_0$ and a function $v\in\bv(\Omega)$ so that $v_n:=v^{\veps_n}$
	satisfies $v_n\to v$ in $L^1(\Omega)$. This gives
	\beq\label{one_way_1}
		L_0=\lim_{\veps\downarrow\veps_0} \,\,\epsone u=\lim_{n} \V v_n\equiv\liminf_n\V v_n\geq \V v.
	\eeq
	On the other hand, 
	\[\|u-v\|_1\leq\|u-v_n\|_1+\|v_n-v\|_1\leq \veps_n+\|v_n-v\|_1,\]
	so that sending $n\to\infty$ yields $\|u-v\|_1\leq\veps_0$. It follows 
	from Definition \ref{eps_p_varn_defn} that 
	\[\epsnaughtone u\leq\V v,\]
	which together with \eq{one_way_1} gives $\epsnaughtone u\leq L_0.$
\end{proof}
We next show how the two previous propositions yield lower semi-continuity 
of $\epsone u$ with respect to $L^1$-convergence. 
\begin{proposition}\label{lsc_p=1}
	Let $\Omega\subset\RR^N$ ($N\geq1$) be an open and bounded $\bv$ extension 
	domain. If $u_n\to u$ in $L^1(\Omega)$, then 
	\[\epsone u\leq\liminf_n\,\,\epsone u_n \qquad\text{for each $\veps>0$.}\]
\end{proposition}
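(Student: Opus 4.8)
The plan is to reduce the whole statement to producing, for a fixed $\veps>0$, a single competitor $v\in\bv(\Omega)$ satisfying $\|u-v\|_1\leq\veps$ and $\V v\leq L$, where $L:=\liminf_n\epsone u_n$. Once such a $v$ is in hand, Definition \ref{eps_p_varn_defn} immediately gives $\epsone u\leq\V v\leq L$, which is exactly the asserted inequality. If $L=\infty$ there is nothing to prove, so I would assume $L<\infty$ and pass to a subsequence, which I relabel as $(u_n)$, along which $\epsone u_n\to L$.

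For each $n$ the attainment result (Proposition \ref{attained_1}) supplies a function $v_n\in\bv(\Omega)$ with $\|u_n-v_n\|_1\leq\veps$ and $\V v_n=\epsone u_n$; this is the essential structural input, since it lets me treat the $(\veps,1)$-variations of the $u_n$ as genuine variations of concrete $\bv$-competitors rather than as mere infima. The next step is to check that $(v_n)$ is bounded in $(\bv(\Omega),\|\cdot\|_{\bv})$ so that the compactness criterion applies. The variations are controlled because $\V v_n=\epsone u_n\to L<\infty$, and the $L^1$-norms because
\[\|v_n\|_1\leq\|v_n-u_n\|_1+\|u_n\|_1\leq\veps+\|u_n\|_1,\]
with $\sup_n\|u_n\|_1<\infty$ since $u_n\to u$ in $L^1(\Omega)$. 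Theorem \ref{multi_var_compact} then produces a further subsequence $(v_{n(k)})$ and a limit $v\in\bv(\Omega)$ with $v_{n(k)}\to v$ in $L^1(\Omega)$.

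It remains to verify the two required properties of $v$. Lower semi-continuity of $\V$ with respect to $L^1$-convergence gives $\V v\leq\liminf_k\V v_{n(k)}=L$, using that $\V v_n\to L$ along the relabelled sequence. For admissibility I would use the triangle inequality
\[\|u-v\|_1\leq\|u-u_{n(k)}\|_1+\|u_{n(k)}-v_{n(k)}\|_1+\|v_{n(k)}-v\|_1\leq\|u-u_{n(k)}\|_1+\veps+\|v_{n(k)}-v\|_1,\]
and send $k\to\infty$: the first and last terms vanish, leaving $\|u-v\|_1\leq\veps$, so $v\in\mathcal V_1(u;\veps)$ and the reduction of the first paragraph applies. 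The one delicate point—and the step I expect to require the most care—is precisely this last one: ensuring that the limit $v$ extracted by compactness still satisfies the constraint $\|u-v\|_1\leq\veps$, i.e.\ that the two successive extractions (one to realize the $\liminf$, one for compactness) do not degrade admissibility. Everything else is bookkeeping built on attainment, Theorem \ref{multi_var_compact}, and lower semi-continuity of $\V$.
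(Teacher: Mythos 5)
Your proof is correct, but it takes a different route from the paper's. The paper's argument for this proposition is a two-step reduction: from the attainment result (Proposition \ref{attained_1}) it gets competitors $v_n$ with $\V v_n=\epsone u_n$ and $\|u_n-v_n\|_1\leq\veps$, observes that for $n$ large these are $(\veps+\delta)$-admissible for the limit $u$, concludes $\epsdeltaone u\leq\liminf_n\epsone u_n$ for every $\delta>0$, and then invokes right-continuity in $\veps$ (Proposition \ref{right_cont}) to let $\delta\downarrow 0$. You instead bypass Proposition \ref{right_cont} entirely: after attainment you apply Theorem \ref{multi_var_compact} and lower semi-continuity of $\V$ to extract a single limit competitor $v$ with $\V v\leq L$, and the triangle inequality shows $v$ is exactly $\veps$-admissible for $u$, so no continuity-in-$\veps$ step is needed. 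Both arguments rest on the same underlying tools (attainment, $\bv$-compactness, lower semi-continuity of $\V$), since the paper's Proposition \ref{right_cont} is itself proved by the compactness-plus-LSC device you use; your version simply inlines that device. In fact your argument is essentially the one the paper uses for the $p=\infty$ analogue (Proposition \ref{lsc_p=infty}), where the admissibility of the limit competitor requires an almost-everywhere-convergence argument; for $p=1$ that step collapses to the triangle inequality, which is why the point you flag as ``delicate'' is in fact immediate here. The paper's route has the organizational advantage of isolating right-continuity as a statement of independent interest; yours is shorter and self-contained given Propositions \ref{attained_1} and Theorem \ref{multi_var_compact}.
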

\begin{proof}
	Fix $\veps>0$. According to Proposition \ref{attained_1}, for each $n\geq 1$ there is a
	function $v_n\in\bv(\Omega)$ with
	\[\|u_n-v_n\|_1\leq\veps\qquad\text{and}\qquad \epsone u_n=\V v_n.\]
	Let $\delta>0$, and choose $M=M(\delta)\in\NN$ so that $\|u_n-u\|_1\leq\delta$ 
	for $n\geq M$. For such $n$ we have
	\[\|v_n-u\|_1\leq \|v_n-u_n\|_1+\|u_n-u\|_1\leq \veps+\delta,\]	
	and therefore
	\[\epsdeltaone u\leq \V v_n=\epsone u_n\qquad\text{for each $n\geq M$.}\]
	It follows that 
	\[\epsdeltaone u\leq\liminf_n\,\,\epsone u_n\qquad\text{whenever $\delta>0$.}\]
	Finally, sending $\delta\downarrow 0$ and using Proposition \ref{right_cont} give
	\[\epsone u=\lim_{\delta\downarrow 0}\,\,\epsdeltaone u\leq \liminf_n\,\,\epsone u_n,\]
	establishing the claim.
\end{proof}

\subsection{The case $p=\infty$}\label{multi_var_case_p=infty}
We shall establish the results corresponding to the previous three propositions 
also for $p=\infty$. The proofs for the two first are similar to those for the case $p=1$, 
while for the proof of lower semi-continuity we follow Fra{\v{n}}kov{\'a}'s argument for 
(R4) in Section \ref{frankova_helly_extn} (cf.\ Proposition 3.6 in \cite{fr}).
In each case there is the added ingredient that $L^1$-convergence implies almost everywhere 
convergence along a subsequence. We first verify that the infimum in the definition of 
$(\veps,\infty)$-variation is attained.

\begin{proposition}\label{attained_infty}
	Let $\Omega\subset\RR^N$ ($N\geq1$) be an open and bounded $\bv$ extension domain,
	and assume $u\in \mathcal R_\infty(\Omega)$ and $\veps>0$. Then there is a function
	$v\in \bv(\Omega)$ with $\|u-v\|_\infty\leq\veps$ and $\V v=\epsinf u$.
\end{proposition}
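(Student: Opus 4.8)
The plan is to mimic the proof of Proposition \ref{attained_1} for the case $p=1$, replacing the $L^1$-distance constraint by the $L^\infty$-distance constraint, with one genuinely new ingredient required to transfer the constraint $\|u-v\|_\infty\leq\veps$ to the $L^1$-limit. Since $u\in\mathcal R_\infty(\Omega)$ we have $\epsinf u<\infty$, so Definition \ref{eps_p_varn_defn} furnishes a minimizing sequence $(v_k)\subset\bv(\Omega)$ with $\|v_k-u\|_\infty\leq\veps$ for all $k\geq 1$ and $\V v_k\to\epsinf u$.

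First I would verify that $(v_k)$ is bounded in $\bv(\Omega)$. The variations $\V v_k$ are bounded because they converge. For the $L^1$-norms I would use $\|v_k-v_1\|_\infty\leq\|v_k-u\|_\infty+\|u-v_1\|_\infty\leq 2\veps$ together with the bound $\|f\|_1\leq|\Omega|\,\|f\|_\infty$, valid on the bounded set $\Omega$, to obtain $\|v_k\|_1\leq\|v_1\|_1+2|\Omega|\veps$. Hence $(v_k)$ is bounded in $\bv(\Omega)$, and Theorem \ref{multi_var_compact} yields a subsequence $(v_{k(j)})\subset(v_k)$ and a function $v\in\bv(\Omega)$ with $v_{k(j)}\to v$ in $L^1(\Omega)$. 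Lower semi-continuity of $\V$ with respect to $L^1$-convergence then gives $\V v\leq\liminf_j\V v_{k(j)}=\epsinf u$.

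The main obstacle, and the step that genuinely differs from the $p=1$ case, is to show that $\|u-v\|_\infty\leq\veps$: unlike $L^1$-distances, $L^\infty$-distances cannot be passed to the $L^1$-limit by the triangle inequality. Instead I would invoke that $L^1$-convergence implies almost everywhere convergence along a subsequence, passing to a further subsequence $(v_{k(j_m)})$ with $v_{k(j_m)}\to v$ almost everywhere. Each constraint $\|u-v_{k(j_m)}\|_\infty\leq\veps$ means $|u-v_{k(j_m)}|\leq\veps$ off a null set; discarding the countable union of these null sets together with the null set where a.e.\ convergence fails, we obtain for a.e.\ $x$ that $|u(x)-v_{k(j_m)}(x)|\leq\veps$ for all $m$ and $v_{k(j_m)}(x)\to v(x)$, whence $|u(x)-v(x)|\leq\veps$. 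Thus $\|u-v\|_\infty\leq\veps$, so $v\in\mathcal V_\infty(u;\veps)$ and Definition \ref{eps_p_varn_defn} gives $\epsinf u\leq\V v$. Combined with the reverse inequality from the previous paragraph, this yields $\V v=\epsinf u$, completing the proof.
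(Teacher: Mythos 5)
Your proposal is correct and follows essentially the same route as the paper's proof: extract a minimizing sequence, bound it in $\bv(\Omega)$ using $\|f\|_1\leq|\Omega|\|f\|_\infty$, apply Theorem \ref{multi_var_compact} and lower semi-continuity of $\V$, and then transfer the $L^\infty$-constraint to the limit by passing to an a.e.-convergent subsequence and intersecting the countably many full-measure sets on which $|u-v_k|\leq\veps$. The only cosmetic difference is that you bound $\|v_k\|_1$ via $\|v_k-v_1\|_\infty\leq 2\veps$ rather than via $\|v_k-u\|_\infty\leq\veps$ and $\|u\|_1$, which is immaterial.
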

\begin{proof} Fix $u$ and $\veps$ as stated.
	According to Definition \ref{eps_p_varn_defn} 
	there is a sequence $(v_k)\subset \bv(\Omega)$ with $\|v_k-u\|_\infty\leq\veps$ 
	for all $k\geq 1$ and such that
	\beq\label{v_k_5}
		\lim_k \V v_k=\epsinf u<\infty.
	\eeq
	Also, 
	\[\|v_k\|_1\leq \|v_k-u\|_1+\|u\|_1\leq|\Omega|  \|v_k-u\|_\infty+\|u\|_1
	\leq |\Omega|\veps+\|u\|_1<\infty.\]
	It follows that $(v_k)$ is bounded in $\bv(\Omega)$,	 
	 so that Theorem \ref{multi_var_compact} gives a subsequence $(v_{k(j)})\subset (v_k)$
	and a $v\in\bv(\Omega)$ with 
	\beq\label{v_k_6}
		v_{k(j)}\to v\quad\text{in $L^1(\Omega)$, and}\quad \V v\leq\liminf_j \V v_{k(j)}.
	\eeq
	Note that \eq{v_k_6}${}_2$ and \eq{v_k_5} give
	\beq\label{v_k_7}
		\V v\leq\liminf_j \V v_{k(j)}\equiv\lim_j \V v_{k(j)}=\epsinf u.
	\eeq
	For the opposite inequality we use that the $L^1$-convergence in \eq{v_k_6}${}_1$ 
	implies almost everywhere convergence along a further subsequence $v_{k(j(i))}=:w_i$:
	\beq\label{w_i_to_v}
		w_i(x)\to v(x)\qquad\text{for all $x\in\Omega_0$,}
	\eeq
	where $\Omega_0\subset\Omega$ has full measure.
	Next, as $\|u-w_i\|_\infty\leq \veps$ for each $i\geq 1$, we have that for each $i\geq 1$
	there is a set $\Omega_i\subset\Omega$ of full measure with
	\beq\label{u-w_i}
		|u(x)-w_i(x)|\leq \veps\qquad\text{for each $x\in\Omega_i$.}
	\eeq
	With 
	\[\Omega':={\textstyle\bigcap}_{i=0}^\infty \Omega_i,\]
	we have that $\Omega'$ has full measure, while \eq{u-w_i} gives
	\beq\label{u-v}
		|u(x)-v(x)|\leq |u(x)-w_i(x)|+|w_i(x)-v(x)|\leq\veps +|w_i(x)-v(x)|
	\eeq
	for each $x\in\Omega'$ and all $i\geq 1$.
	Sending $i\to\infty$ we obtain from \eq{w_i_to_v} that
	\[|u(x)-v(x)|\leq \veps\qquad\text{for each $x\in\Omega'$,}\]
	so that 
	\[\|u-v\|_\infty\leq \veps.\]
	As $v\in\bv(\Omega)$, Definition \ref{eps_p_varn_defn} gives $\epsinf u\leq \V v$. 
	Together with \eq{v_k_7} this shows that $\epsinf u= \V v$.
\end{proof}
We next establish right-continuity of $\epsinf u$ with respect to $\veps$.
\begin{proposition}\label{right_cont_inf}
	Let $\Omega\subset\RR^N$ ($N\geq1$) be an open and bounded $\bv$ extension domain,
	and let $u\in \mathcal R_\infty(\Omega)$ and $\veps_0>0$. Then
	\beq\label{r_cont_inf}
		\lim_{\veps\downarrow\veps_0} \,\,\epsinf u=\epsnaughtinf u.
	\eeq
\end{proposition}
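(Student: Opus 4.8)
The plan is to follow the argument for the case $p=1$ in Proposition \ref{right_cont}, substituting the attainment result for $p=\infty$ (Proposition \ref{attained_infty}) in place of Proposition \ref{attained_1}, and handling the recovery of an $L^\infty$-bound on the limit by passing to an almost-everywhere convergent subsequence, exactly as in the proof of Proposition \ref{attained_infty}.

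First I would observe that since $u\in\mathcal R_\infty(\Omega)$, the right-hand side $\epsnaughtinf u$ is finite. By part (3) of Lemma \ref{monotonicity}, $\epsinf u\leq\epsnaughtinf u$ for all $\veps>\veps_0$, and the quantity $\epsinf u$ is monotone in $\veps$; hence the limit $L_0:=\lim_{\veps\downarrow\veps_0}\epsinf u$ exists as a finite number satisfying $L_0\leq\epsnaughtinf u$. This supplies one of the two inequalities directly.

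For the reverse inequality, I would invoke Proposition \ref{attained_infty} to choose, for each $\veps>\veps_0$, a function $v^\veps\in\bv(\Omega)$ with $\|u-v^\veps\|_\infty\leq\veps$ and $\V v^\veps=\epsinf u\leq\epsnaughtinf u<\infty$. Since $\|v^\veps\|_1\leq|\Omega|\,\|v^\veps-u\|_\infty+\|u\|_1\leq|\Omega|\veps+\|u\|_1<\infty$, the family $\{v^\veps\,|\,\veps>\veps_0\}$ is bounded in $\bv(\Omega)$. Theorem \ref{multi_var_compact} then yields a sequence $\veps_n\downarrow\veps_0$ and a limit $v\in\bv(\Omega)$ with $v_n:=v^{\veps_n}\to v$ in $L^1(\Omega)$, and lower semi-continuity of $\V$ with respect to $L^1$-convergence gives $\V v\leq\liminf_n\V v_n=L_0$.

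The main obstacle is that the convergence $v_n\to v$ holds only in $L^1$, whereas I must transfer the uniform bounds $\|u-v_n\|_\infty\leq\veps_n$ to the limit in order to apply Definition \ref{eps_p_varn_defn}. This is resolved precisely as in Proposition \ref{attained_infty}: I would extract a further subsequence along which $v_n\to v$ almost everywhere on a full-measure set, intersect this with the full-measure sets on which $|u-v_n|\leq\veps_n$ holds, and pass to the pointwise limit on the resulting full-measure set. Because $\veps_n\downarrow\veps_0$, this yields $|u(x)-v(x)|\leq\veps_0$ almost everywhere, i.e.\ $\|u-v\|_\infty\leq\veps_0$. Definition \ref{eps_p_varn_defn} then gives $\epsnaughtinf u\leq\V v\leq L_0$, which together with $L_0\leq\epsnaughtinf u$ establishes \eq{r_cont_inf}.
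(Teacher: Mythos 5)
Your proposal is correct and follows essentially the same route as the paper's proof: the monotonicity bound for one inequality, then Proposition \ref{attained_infty}, $\bv$-boundedness, Theorem \ref{multi_var_compact}, lower semi-continuity of $\V$, and passage to an almost-everywhere convergent subsequence to transfer the $L^\infty$-bounds to the limit. If anything, your explicit extraction of a further subsequence for the a.e.\ convergence is slightly more careful than the paper's phrasing, which asserts a.e.\ convergence of $(v_n)$ directly.
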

\begin{proof}
	Since $u\in \mathcal R_\infty(\Omega)$ the right-hand side of \eq{r_cont_inf} is finite.
	According to Lemma \ref{monotonicity}, we have $\epsinf u\leq \epsnaughtinf u$
	for $\veps>\veps_0$. Denoting the limit on the left-hand side of 
	\eq{r_cont_inf} by $L_0$, it follows that $L_0$ exists as a finite number 
	satisfying
	\beq\label{one_way_inf_1}
		L_0\leq \epsnaughtinf u.
	\eeq
	For the opposite inequality we use Proposition \ref{attained_infty} to select, 
	for each $\veps>\veps_0$, a function $v^\veps\in\bv(\Omega)$ with
	\[\|u-v^\veps\|_\infty\leq\veps\qquad\text{and}\qquad \V v^\veps=\epsinf u\leq L_0.\]
	Also,
	\[\|v^\veps\|_1\leq\|v^\veps-u\|_1+\|u\|_1\leq |\Omega|\|v^\veps-u\|_\infty+\|u\|_1\leq |\Omega|\veps+\|u\|_1<\infty,\]
	so that $\{v^\veps\,|\, \veps>\veps_0\}$ is bounded in $\bv(\Omega)$.
	According to Theorem \ref{multi_var_compact} there is a sequence $(\veps_n)_{n\geq1}$
	with $\veps_n\downarrow \veps_0$ and a function $v\in\bv(\Omega)$ so that $v_n:=v^{\veps_n}$
	satisfies $v_n\to v$ in $L^1(\Omega)$. 
	This gives
	\beq\label{one_way_inf}
		L_0=\lim_{\veps\downarrow\veps_0} \,\,\epsinf u=\lim_{n} \V v_n\equiv\liminf_n\V v_n\geq \V v.
	\eeq
	Next, since $v_n\to v$ in $L^1(\Omega)$, there is a full-measure set $\Omega_0\subset \Omega$
	such that 
	\beq\label{next}
		v_n(x)\to v(x)\qquad\text{for each $x\in\Omega_0$.}
	\eeq
	Also, since $\|u-v_n\|_\infty\leq\veps_n$ for each $n\geq 1$, there are full measure sets $\Omega_n$
	such that
	\[|u(x)-v_n(x)|\leq \veps_n\qquad\text{for each $x\in\Omega_n$, $n\geq 1$.}\]
	Therefore, with 
	\[\Omega':={\textstyle\bigcap}_{n=0}^\infty \Omega_n,\]
	we have that $\Omega'$ is of full measure and such that
	\[|u(x)-v(x)|\leq |u(x)-v_n(x)|+|v_n(x)-v(x)|\leq \veps_n+|v_n(x)-v(x)|\]
	for each $x\in\Omega'$ and for all $n\geq 1$.
	Sending $n\to\infty$ and using \eq{next} yields $|u(x)-v(x)|\leq\veps_0$ for each $x\in\Omega'$,
	so that
	\[\|u-v\|_\infty\leq\veps_0.\] 
	As $v\in\bv(\Omega)$, Definition \ref{eps_p_varn_defn} gives
	\[\epsnaughtinf u\leq\V v,\]
	which together with \eq{one_way_inf} gives $\epsnaughtinf u\leq L_0$. Combined with \eq{one_way_inf_1}
	this finishes the proof.
\end{proof}
We finally establish lower semi-continuity of $\epsinf$ with respect to $L^1$-convergence.
\begin{proposition}\label{lsc_p=infty}
	Let $\Omega\subset\RR^N$ ($N\geq1$) be an open and bounded $\bv$ extension domain.
	If $(u_n)\subset\mathcal R_\infty(\Omega)$ and $u_n\to u$ in $L^1(\Omega)$, then 
	\beq\label{lsc_infty}
		\epsinf u\leq\liminf_n\,\,\epsinf u_n \qquad\text{for each $\veps>0$.}
	\eeq
\end{proposition}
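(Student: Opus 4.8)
The plan is to mimic the strategy used for Propositions \ref{attained_infty} and \ref{right_cont_inf}, combining the $\bv$-compactness of Theorem \ref{multi_var_compact} with passage to almost-everywhere convergent subsequences. Fix $\veps>0$ and set $L:=\liminf_n \epsinf u_n$. If $L=\infty$ there is nothing to prove, so I assume $L<\infty$ and pass to a subsequence $(u_{n(k)})$ along which $\epsinf u_{n(k)}\to L$.

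For each $k$, since $u_{n(k)}\in\mathcal R_\infty(\Omega)$, Proposition \ref{attained_infty} supplies a function $v_k\in\bv(\Omega)$ with $\|u_{n(k)}-v_k\|_\infty\leq\veps$ and $\V v_k=\epsinf u_{n(k)}$. The variations $\V v_k$ then converge to $L$ and are in particular bounded, while $\|v_k\|_1\leq|\Omega|\veps+\|u_{n(k)}\|_1$; since the $L^1$-convergent sequence $(u_n)$ is bounded in $L^1(\Omega)$, the sequence $(v_k)$ is bounded in $\bv(\Omega)$. Theorem \ref{multi_var_compact} then yields, after passing to a further subsequence, a limit $v\in\bv(\Omega)$ with $v_k\to v$ in $L^1(\Omega)$, and lower semi-continuity of $\V$ gives $\V v\leq\liminf_k \V v_k=L$.

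The decisive step is to establish $\|u-v\|_\infty\leq\veps$: once this is known, Definition \ref{eps_p_varn_defn} immediately yields $\epsinf u\leq\V v\leq L$, which is precisely the asserted inequality. Recovering an $L^\infty$-bound from mere $L^1$-convergence is the main obstacle, and I would handle it exactly as in the proofs of Propositions \ref{attained_infty} and \ref{right_cont_inf}. The idea is to pass to one further subsequence so that simultaneously $v_k\to v$ and $u_{n(k)}\to u$ almost everywhere (legitimate since $L^1$-convergence forces a.e.\ convergence along a subsequence). Intersecting the two full-measure sets carrying these a.e.\ limits with the countably many full-measure sets on which $|u_{n(k)}-v_k|\leq\veps$ produces a full-measure set $\Omega'$ on which
\[|u(x)-v(x)|\leq |u(x)-u_{n(k)}(x)|+\veps+|v_k(x)-v(x)|\]
holds for every $k$. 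Letting $k\to\infty$ annihilates the first and third terms and leaves $|u(x)-v(x)|\leq\veps$ for every $x\in\Omega'$, whence $\|u-v\|_\infty\leq\veps$, completing the argument.
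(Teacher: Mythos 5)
Your proposal is correct and follows essentially the same route as the paper's proof: attainment via Proposition \ref{attained_infty}, $\bv$-compactness and lower semicontinuity of $\V$ to get $\V v\leq L$, and then a.e.-convergent subsequences intersected with the countably many full-measure sets where $|u_{n(k)}-v_k|\leq\veps$ to conclude $\|u-v\|_\infty\leq\veps$. The only (immaterial) difference is that the paper extracts the a.e.-convergent subsequence of $(u_{n(k)})$ before constructing the minimizers $v_k$, whereas you do both extractions at the end.
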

\begin{proof}
	Fix $\veps>0$. If the right-hand side of \eq{lsc_infty} is infinite, there is nothing to prove.
	So assume $L:=\liminf_n\,\,\epsinf u_n<\infty$. We select a subsequence $(u_{n(k)})\subset(u_n)$
	with
	\[\lim_k \,\,\epsinf u_{n(k)}=L.\]
	As $u_{n(k)}\to u$ in $L^1(\Omega)$ we can extract a further subsequence 
	$(u_{n(k(j))})\subset (u_{n(k)})$ so that 
	\beq\label{pw1}
		u_{n(k(j))}(x)\to u(x) \qquad\text{for each $x\in \Omega_{-1}$,}
	\eeq
	where $\Omega_{-1}\subset \Omega$ has full measure.
	Since each $u_{n(k(j))}\in\mathcal R_\infty(\Omega)$, Proposition \ref{attained_infty} gives 
	a sequence $(v_j)\subset \bv(\Omega)$ with
	\beq\label{vj}
		\V v_j=\epsinf u_{n(k(j))}\qquad\text{and}\qquad \|v_j-u_{n(k(j))}\|_\infty\leq \veps
		\qquad\text{for each $j\geq1$.}
	\eeq
	We therefore have
	\beq\label{next_next}
		\lim_j \V v_j = \lim_j \epsinf u_{n(k(j))} =\lim_k \,\,\epsinf u_{n(k)}=L<\infty,
	\eeq
	and
	\begin{align*}
		\|v_j\|_1&\leq\|v_j-u_{n(k(j))}\|_1+\|u_{n(k(j))}\|_1\\
		&\leq |\Omega|\|v_j-u_{n(k(j))}\|_\infty+\|u_{n(k(j))}\|_1
		\leq |\Omega|\veps+ \|u_{n(k(j))}\|_1,
	\end{align*}
	which is uniformly bounded with respect to $j$ since $u_n\to u$ in $L^1(\Omega)$.
	The sequence $(v_j)$ is therefore bounded in $\bv(\Omega)$, and Theorem 
	\ref{multi_var_compact} gives a subsequence $(v_{j(i)})\subset(v_j)$ and 
	a $v\in\bv(\Omega)$ such that $v_{j(i)}\to v$ in $L^1(\Omega)$. According to \eq{next_next}
	and lower semi-continuity of $\V$ with respect to $L^1$-convergence, we get
	\beq\label{var_v}
		\V v\leq \liminf_i \V v_{j(i)}\equiv \lim_j \V v_j=L.
	\eeq
	We then extract a further subsequence $(v_{j(i(h))})\subset (v_{j(i)})$ together with a 
	full measure set $\Omega_0\subset\Omega$ so that
	\beq\label{pw2}
		v_{j(i(h))}(x)\to v(x)\qquad\text{for each $x\in\Omega_0$.}
	\eeq
	Finally, according to \eq{vj}${}_2$ there is, for each $j\geq 1$, a full measure set 
	$\Omega_j\subset\Omega$ such that
	\beq\label{pw3}
		|u_{n(k(j))}-v_j(x)|\leq\veps\qquad\text{for each $x\in\Omega_j$, $j\geq1$.}
	\eeq
	Defining the full measure set  
	\[\Omega':={\textstyle\bigcap}_{j=-1}^\infty \Omega_j,\]
	we therefore get from \eq{pw3} that
	\begin{align*}
		|u(x)-v(x)|&\leq |u(x)-u_{n(k(j(i(h))))}(x)|+|u_{n(k(j(i(h))))}(x)-v_{j(i(h))}(x)|+|v_{j(i(h))}(x)-v(x)|\\
		&\leq |u(x)-u_{n(k(j(i(h))))}(x)|+\veps+|v_{j(i(h))}(x)-v(x)|\qquad\text{for each $x\in\Omega'$.}
	\end{align*}
	Sending $h\to\infty$ and using \eq{pw1} and \eq{pw2} gives $|u(x)-v(x)|\leq\veps$ for 
	all $x\in\Omega'$, so that
	\beq\label{inf_eps}
		\|u(x)-v(x)\|_\infty\leq\veps.
	\eeq
	As $v\in\bv(\Omega)$, Definition \ref{eps_p_varn_defn} together with \eq{var_v} and \eq{inf_eps} 
	give
	\[\epsinf u\leq \V v\leq L.\]
	As $\veps>0$ is arbitrary, this gives \eq{lsc_infty}.
\end{proof}

\subsection{Regulated vs.\ $\infty$-regulated functions of a single variable}\label{reg_vs_infty_reg_1_d}
In this section we clarify the relationship between standard regulated functions (cf.\ Definition
\ref{reg}) and $\infty$-regulated functions (cf.\ Definition \ref{p_reg} with $p=\infty$) defined on 
an open interval $I$. The following result shows that the latter coincide with 
``essentially regulated'' functions. (Recall that $\|\cdot\|$ denotes uniform norm;
step functions were defined in Section \ref{frankova_helly_extn}.)
\begin{proposition}\label{1_d_reg_vs_infty_reg}
	Let $I=(a,b)$ be an open and bounded interval, and $u:I\to\RR$.
	Then the following statements are equivalent:
	\begin{itemize}
		\item[(a)] $u$ is $\infty$-regulated, i.e., $\epsinf u<\infty$ 
		for each $\veps>0$.
		\item[(b)] $u$ can be realized as an $L^\infty$-limit of step functions on $I$.
		\item[(c)] $u$ has a regulated version, i.e., there exists  $\bar u\in\mathcal R(I)$ 
		with $\bar u(x)=u(x)$ for a.a.\ $x\in I$.
	\end{itemize}
 \end{proposition}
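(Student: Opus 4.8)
The plan is to establish the cycle of implications $(a)\Rightarrow(c)\Rightarrow(b)\Rightarrow(a)$, with the first implication carrying essentially all of the work and the latter two following routinely from known facts about regulated functions. In particular, the proof will not invoke Theorem \ref{multi_var_compact}; the one-dimensional structure lets me replace $L^1$-compactness by uniform completeness of $(\mathcal B(I),\|\cdot\|)$.

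For the easy implications: to see $(c)\Rightarrow(b)$, suppose $\bar u\in\mathcal R(I)$ is a regulated version of $u$. By (R2), $\bar u$ is a uniform limit of step functions $s_n$; since $\|f\|_\infty\le\|f\|$ and $u=\bar u$ a.e., we get $\|u-s_n\|_\infty=\|\bar u-s_n\|_\infty\le\|\bar u-s_n\|\to0$, which is $(b)$. To see $(b)\Rightarrow(a)$, note that any step function $s$ lies in $\bv(I)$ with $\V s$ equal to the sum of its jumps; so given $\veps>0$, choosing a step function $s$ with $\|u-s\|_\infty\le\veps$ places $s\in\mathcal V_\infty(u;\veps)$, whence $\epsinf u\le\V s<\infty$. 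As $\veps>0$ was arbitrary, this is $(a)$.

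The heart of the matter is $(a)\Rightarrow(c)$. Fix a sequence $\veps_l\downarrow0$. Since $u$ is $\infty$-regulated, the infimum defining $\epsinf u$ is finite for $\veps=\veps_l$, so $\mathcal V_\infty(u;\veps_l)\neq\emptyset$ and I may choose $v_l\in\bv(I)$ with $\|u-v_l\|_\infty\le\veps_l$. By Remark \ref{notns_of_varn} each $v_l$ has a $\bpv(I)$-version; I take the \emph{right-continuous} good representative $\bar v_l\in\bpv(I)\subset\mathcal R(I)$ (standard, cf.\ Remark \ref{notns_of_varn}), which is bounded and still satisfies $\|u-\bar v_l\|_\infty\le\veps_l$ because $\bar v_l=v_l$ a.e. The key step is to upgrade these essential-sup estimates to genuine uniform estimates. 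For indices $l,q$ the difference $g:=\bar v_l-\bar v_q$ is right-continuous and satisfies $\|g\|_\infty\le\veps_l+\veps_q$; since the set $\{|g|>\veps_l+\veps_q\}$ is null, for each $x_0\in I$ we may pick $x_n\downarrow x_0$ avoiding this set, and right-continuity gives $g(x_0)=\lim_n g(x_n)$ with $|g(x_n)|\le\veps_l+\veps_q$, so $|g(x_0)|\le\veps_l+\veps_q$. Hence $\|\bar v_l-\bar v_q\|\le\veps_l+\veps_q$, so $(\bar v_l)$ is uniformly Cauchy and, by completeness of $(\mathcal B(I),\|\cdot\|)$, converges uniformly to some $\bar u\in\mathcal B(I)$.

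It remains to check that $\bar u$ does the job. Since $\mathcal R(I)$ is uniformly closed (again (R2)) and each $\bar v_l\in\mathcal R(I)$, the uniform limit $\bar u$ is regulated. Finally, choosing for each $l$ a full-measure set on which $|u-\bar v_l|\le\veps_l$ and intersecting over all $l$ yields a full-measure set $\Omega'$ on which $\bar v_l(x)\to u(x)$; but $\bar v_l(x)\to\bar u(x)$ everywhere, so $\bar u=u$ on $\Omega'$, i.e.\ $\bar u$ is a regulated version of $u$, giving $(c)$. I expect the main obstacle to be precisely the essential-sup-to-uniform upgrade in the key step: passing from $L^\infty$-closeness, which ignores null sets, to the everywhere pointwise control needed for uniform convergence. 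This is exactly why a \emph{right-continuous} (or otherwise consistently chosen) representative must be selected rather than an arbitrary minimal-variation $\bpv$-version, since two such versions of $\veps$-close functions can differ by more than $\veps$ at isolated jump points.
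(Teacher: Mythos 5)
Your proof is correct, but it is organized differently from the paper's: you run the cycle $(a)\Rightarrow(c)\Rightarrow(b)\Rightarrow(a)$ and concentrate all the work in $(a)\Rightarrow(c)$, whereas the paper proves $(a)\Rightarrow(b)\Rightarrow(c)\Rightarrow(a)$ and places the hard work in $(b)\Rightarrow(c)$. The underlying technical device is the same in both arguments --- upgrading an essential-sup bound to a genuine sup bound by choosing right-continuous representatives and approaching each point from the right through the (dense) complement of a null set --- but it is applied to different objects. The paper first descends from $\bv$-approximants to step functions via Theorem 7.2 of \cite{le} and (R2), and then exploits that right-continuous step functions are locally \emph{constant} on intervals $[x,x+\delta_x)$, so a single point of the full-measure set in that interval controls the value at $x$. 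You instead work directly with right-continuous $\bpv$-representatives of the $\bv$-approximants $v_l$ and use genuine right-continuity (a limit along a sequence $x_n\downarrow x_0$ avoiding the bad null set) rather than local constancy; this skips the reduction to step functions for the hard direction and gives the quantitative uniform bound $\|\bar v_l-\bar v_q\|\le\veps_l+\veps_q$ directly, at the cost of having to justify that the right-continuous representative of a $\bv(I)$ function is indeed in $\bpv(I)$ (which follows from Remark \ref{notns_of_varn} together with the fact that modifying a $\bpv$ function on its countable jump set to its right limits preserves bounded pointwise variation). Your closing remark correctly identifies the one genuine pitfall: an arbitrary minimal-variation representative would not suffice, since the a.e.\ bound gives no control at individual jump points without a consistent (e.g.\ right-continuous) normalization. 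Both routes then finish identically, via completeness of $(\mathcal B(I),\|\cdot\|)$ and the uniform closedness of $\mathcal R(I)$ from (R2).
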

\begin{proof}
	(a) $\Rightarrow$ (b): Fix $\veps>0$. By Definition \ref{p_reg} there is a 
	function $v^\veps\in\bv(I)$ with $\|u-v^\veps\|_\infty\leq \frac{\veps}{2}$. According to 
	Theorem 7.2 in \cite{le}, $v^\veps$ has a version $\bar v^\veps$ belonging to 
	$\bpv(I)\subset\mathcal R(I)$.
	It follows from property (R2) in Section \ref{frankova_helly_extn} that there is a step 
	function $w^\veps:I\to\RR$ satisfying $\|\bar v^\veps-w^\veps\|\leq\frac{\veps}{2}$.
	Therefore,
	\begin{align*}
		\|u-w^\veps\|_\infty&\leq \|u-v^\veps\|_\infty+\|v^\veps-w^\veps\|_\infty\\
		&=\|u-v^\veps\|_\infty+\|\bar v^\veps-w^\veps\|_\infty\\
		&\leq\|u-v^\veps\|_\infty+\|\bar v^\veps-w^\veps\|
		\leq\textstyle\frac{\veps}{2}+\frac{\veps}{2}=\veps.
	\end{align*}
	As $w^\veps$ is a step function and $\veps$ is arbitrary, statement (b) follows.
	
	(b) $\Rightarrow$ (c): Assume $(u_n)$ is  a sequence of step functions on $I$ 
	with $\|u-u_n\|_\infty\to 0$.
	For each $n\geq 1$ let $\bar u_n$ denote the right-continuous version of $u_n$, which is obtained 
	from $u_n$ by changing at most a finite number of its values. It follows that 
	$\|u-\bar u_n\|_\infty\to 0$. Therefore, there is a measurable set $E\subset I$ with $|E|=|I|$ 
	and such that $\bar u_n\to u$ uniformly on $E$. We claim that the sequence $(\bar u_n)$ is uniformly 
	Cauchy on {\em all} of $I$, i.e., 
	\beq\label{sub_claim}
		\|\bar u_m-\bar u_n\|\to 0 \qquad\text{as $m,n\to\infty$.}
	\eeq
	Assuming the validity of \eq{sub_claim} for now, we have, by completeness of $(\mathcal B(I),\|\cdot\|)$, 
	that there exists $\bar u\in \mathcal B(I)$ such that $\bar u_n\to \bar u$ uniformly on $I$.
	In particular, according to property (R2) in Section \ref{frankova_helly_extn}, $\bar u\in \mathcal R(I)$.
	Since $\bar u_n\to u$ uniformly on $E$, it follows that $\bar u$ and $u$ agree on $E$, establishing (c). 
	
	It remains to verify \eq{sub_claim}. For this, fix $\veps>0$. Since $\bar u_n\to u$ uniformly on $E$ we have:
	there is an $N_\veps\in\NN$ such that 
	\beq\label{on_E}
		\sup_{y\in E} |\bar u_n(y)-\bar u_m(y)|\leq\veps\qquad\text{whenever $m,n\geq N_\veps$.}
	\eeq
	Fix $m,n\geq N_\veps$ and $x\in I$. Since $\bar u_n$ and $\bar u_m$ are right-continuous 
	step functions, there is a $\delta_x>0$ such that both $\bar u_n$ and $\bar u_m$ are 
	constant on $J_x:=[x,x+\delta_x)$. Since $E\subset I$ has full measure, $E$ is dense in $I$,
	and there is therefore a point $y_x\in E\cap J_x$. As $\bar u_n$ and $\bar u_m$ are 
	constant on $J_x$ we get from \eq{on_E} that 
	\[|\bar u_n(x)-\bar u_m(x)|=|\bar u_n(y_x)-\bar u_m(y_x)|\leq \veps.\]
	As $x\in  I$ is arbitrary it follows that
	\[\sup_{x\in I}|\bar u_n(x)-\bar u_m(x)|\leq \veps.\]
	This shows that $\|\bar u_n-\bar u_m\|\leq\veps$ whenever $m,n\geq N_\veps$, 
	establishing \eq{sub_claim}.
	
	(c) $\Rightarrow$ (a): Assume $\bar u\in\mathcal R(I)$ is a version of $u$, and fix $\veps>0$. 
	According to (R3) in Section \ref{frankova_helly_extn}, we have $\evar \bar u<\infty$, i.e.,
	there is a function $v^\veps\in\bpv(I)$ with $\|\bar u-v^\veps\|\leq \veps$.
	According to Theorem 7.2 in \cite{le}, $v^\veps$ belongs to $\bv(I)$ and we have
	\[\|u-v^\veps\|_\infty= \|\bar u-v^\veps\|_\infty\leq \|\bar u-v^\veps\|\leq \veps.\]
	Thus, $\epsinf u<\infty$, and since $\veps>0$ is arbitrary, we have $u\in\mathcal R_\infty(I)$. 
\end{proof}

%
%
%
%
%
%
%
%


\section{Saturation}\label{aa}
The strategy in Fra{\v{n}}kov{\'a}'s extension of Helly's theorem may 
be abstracted as follows. Assume given a criterion for precompactness of  
sequences in some function space $\mathcal X$, which is contained in a larger 
function space $\mathcal Y$. One then considers sequences $(u_n)$ 
in $\mathcal Y$ 
with the property that for each $\veps>0$ there is a ``nearby'' sequence 
$(v_n^\veps)_n\subset \mathcal X$ which satisfies the original criterion.
Fra{\v{n}}kov{\'a}'s work \cite{fr}  shows that this strategy provides a 
genuine extension of Helly's theorem in $\mathcal X=\bpv(I)$
to a criterion for precompactness in $\mathcal Y=\mathcal R(I)$. 
Similarly, Theorem \ref{multi_var_frankova_any_p} above gives a proper 
extension of Theorem \ref{multi_var_compact} from $\mathcal X=\bv(\Omega)$ 
to $\mathcal Y=\mathcal R_p(\Omega)$ ($p\in[1,\infty]$).

It is natural to consider this strategy for other compactness criteria. 
In this section we consider two concrete cases: 
Fra{\v{n}}kov{\'a}'s own theorem and the Ascoli-Arzel\`{a} theorem.
However, our findings indicate that this approach 
does not readily generalize to other settings.

\subsection{Saturation of Fra{\v{n}}kov{\'a}'s theorem}\label{reg_sat}
Given Fra{\v{n}}kov{\'a}'s extension of Helly's theorem, it is natural to ask 
if one can ``take another step'' and obtain a further extension. 
The idea would be to have $\mathcal R(I)$ and $\evar$ play the roles of 
$\bv(I)$ and $\var$ in the proof of Theorem \ref{fr_thm} above. 
For $(u_n)\subset\mathcal B(I)$ we would thus 
make the following assumptions:
\begin{itemize}
	\item[(i)]  $(u_n)$ is bounded in $\mathcal B(I)$, and
	\item[(ii)] for each $\delta>0$ there is a sequence $(w_n^\delta)_n\subset\mathcal R(I)$
	satisfying $\|u_n-w_n^\delta\|\leq\delta$ for all $n\geq1$, and $(w_n^\delta)_n$ has
	uniformly bounded $\veps$-variations.
\end{itemize}
However, it is not hard to see that (ii) implies that  the original sequence $(u_n)$ itself
has uniformly bounded $\veps$-variations. 
Indeed, we may even weaken (ii) by requiring that for each $\delta>0$ there is a single (small) $\veps'>0$
such that the approximating sequence $(w_n^\delta)_n$ has uniformly bounded $\veps'$-variation:
\begin{itemize}
	\item[(ii)$'$] for each $\delta>0$ there is an $\veps'(\delta)>0$, with $\veps'(\delta)\to0$ as $\delta\to0$,
	and a sequence $(w_n^\delta)_n\subset\mathcal R(I)$
	satisfying $\|u_n-w_n^\delta\|\leq\delta$ for all $n\geq1$, and 
	\beq\label{weakened}
		\sup_n\,\,\veps'(\delta)\text{-var}\,w_n^\delta<\infty.
	\eeq
\end{itemize}
Assumption (ii)$'$ gives that, for each $\delta>0$, there is a $K_\delta<\infty$ such that 
$\veps'(\delta)\text{-var}\,w_n^\delta\leq K_\delta$ for all $n\geq1$. In particular,
for each $n\geq 1$ there is a function $z_n^\delta\in BPV(I)$ with 
$\|w_n^\delta-z_n^\delta\|\leq\veps'(\delta)$ and $\var z_n^\delta\leq K_\delta$.

Given $\veps>0$ we can then choose $\delta(\veps)>0$ so small that 
$\delta(\veps)+\veps'(\delta(\veps))\leq\veps$. Setting $M_\veps:=K_{\delta(\veps)}$ 
and $v_n^\veps:=z_n^{\delta(\veps)}$, we get
\[\|u_n-v_n^\veps\|\leq \|u_n-w_n^{\delta(\veps)}\|+\|w_n^{\delta(\veps)}-z_n^{\delta(\veps)}\|\leq \veps
\qquad\text{and}\qquad \var v_n^\veps\leq M_\veps,\]
for all $n\geq 1$. Thus, $\sup_n\evar u_n\leq M_\veps<\infty$ for each $\veps>0$, i.e., the original 
sequence $(u_n)$ has uniformly bounded $\veps$-variations.

This shows that the assumptions (i) and (ii)$'$ (a fortiori (i) and (ii)) imply that the given 
sequence already meets the assumptions of Theorem \ref{fr_thm}. That is,
Fra{\v{n}}kov{\'a}'s theorem is saturated in this respect.

\subsection{Saturation of the Ascoli-Arzel\`{a} theorem}\label{aa_sat}
It is natural to consider the same strategy for other standard compactness 
criteria. In this section we consider the Ascoli-Arzel\`{a} theorem, which can
be formulated as follows (see \cite{dib}, pp.\ 203-204). 
(For an open set $\Omega\subset\RR^N$ we equip $C(\Omega)$ with uniform norm $\|\cdot\|$.)

\begin{theorem}[Ascoli-Arzel\`{a}, version I] \label{aa1}
	Assume $(u_n)\subset C(\Omega)$ is a bounded sequence  for which there 
	is a modulus of continuity\footnote{I.e., $\omega:[0,\infty)\to[0,\infty)$ is continuous 
	and increasing and with $\omega(0)=0$.} $\omega$ such that
	\beq\label{mod_cont}
		\sup_n |u_n(x)-u_n(y)|\leq \omega(|x-y|)\qquad\text{for all $x,y\in\Omega$.}
	\eeq
	Then there is a subsequence $(u_{n(k)})\subset(u_n)$ and a function $u\in C(\Omega)$ 
	such that $u(x)=\lim_k u_{n(k)}(x)$ for every $x\in\Omega$.
\end{theorem}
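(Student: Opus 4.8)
The plan is to run the classical three-step Ascoli--Arzel\`a argument, which suffices here because only \emph{pointwise} convergence is claimed (no local compactness or uniform convergence is needed). First I would exploit separability: since $\Omega\subset\RR^N$ is open it admits a countable dense subset $D=\{x_1,x_2,\dots\}\subset\Omega$. As $(u_n)$ is bounded in $(C(\Omega),\|\cdot\|)$, for each fixed $j$ the real sequence $(u_n(x_j))_n$ is bounded, so Bolzano--Weierstrass together with a diagonal extraction produces a single subsequence $(u_{n(k)})\subset(u_n)$ along which $u_{n(k)}(x_j)$ converges for every $j\geq 1$.

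Second, I would upgrade this to convergence at every point of $\Omega$ by an $\frac{\veps}{3}$-argument based on the uniform modulus of continuity. Fix $x\in\Omega$ and $\veps>0$. Since $\omega$ is continuous with $\omega(0)=0$, choose $\delta>0$ with $\omega(\delta)\leq\frac{\veps}{3}$; density of $D$ in $\Omega$ then yields some $x_j\in D$ with $|x-x_j|<\delta$. For all $k,l$ the triangle inequality gives
\begin{align*}
	|u_{n(k)}(x)-u_{n(l)}(x)|
	&\leq |u_{n(k)}(x)-u_{n(k)}(x_j)|+|u_{n(k)}(x_j)-u_{n(l)}(x_j)|\\
	&\quad+|u_{n(l)}(x_j)-u_{n(l)}(x)|\\
	&\leq 2\,\omega(|x-x_j|)+|u_{n(k)}(x_j)-u_{n(l)}(x_j)|,
\end{align*}
where the two outer terms are each at most $\omega(\delta)\leq\frac{\veps}{3}$ by \eq{mod_cont} and monotonicity of $\omega$, while the middle term drops below $\frac{\veps}{3}$ once $k,l$ are large, since $(u_{n(k)}(x_j))_k$ converges. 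Hence $(u_{n(k)}(x))_k$ is Cauchy in $\RR$, and I would set $u(x):=\lim_k u_{n(k)}(x)$.

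Third, I would verify $u\in C(\Omega)$ by showing it inherits the modulus $\omega$: for any $x,y\in\Omega$, passing to the limit in $|u_{n(k)}(x)-u_{n(k)}(y)|\leq\omega(|x-y|)$ yields $|u(x)-u(y)|\leq\omega(|x-y|)$, so $u$ is (uniformly) continuous, completing the argument.

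There is no substantial obstacle here; the delicate points are purely organizational. The diagonal extraction must be arranged so that one subsequence works simultaneously for all $x_j$, and in the second step the approximating point $x_j$ must be selected \emph{inside} $\Omega$ — which is legitimate precisely because $\Omega$ is open, so points of $D$ arbitrarily close to $x$ do lie in $\Omega$. I would also remark that the method provides no uniform control on the convergence, in keeping with the statement asserting only everywhere pointwise convergence.
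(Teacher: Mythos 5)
Your proof is correct. The paper does not actually prove this theorem---it is recalled as background with a citation to \cite{dib}, pp.\ 203--204---so there is no in-paper argument to compare against; your three-step diagonal/equicontinuity argument (countable dense subset, Bolzano--Weierstrass with diagonal extraction, $\frac{\veps}{3}$-upgrade to all of $\Omega$, and passage to the limit in \eq{mod_cont} to see that $u$ inherits the modulus $\omega$) is the standard textbook proof and is complete as written.
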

\noindent (The convergence is uniform on compact subsets of $\Omega$, but this is 
not important for what follows.)
It is convenient to also record an alternative formulation in terms of uniform equicontinuity of $(u_n)$, 
i.e., the requirement that for any $\delta>0$ there is a $\eta(\delta)>0$ such that 
$\sup_n|u_n(x)-u_n(y)|\leq\delta$ whenever $x,y\in\Omega$ satisfy $|x-y|\leq \eta(\delta)$.
\begin{theorem}[Ascoli-Arzel\`{a}, version II]\label{aa2}
	If $(u_n)\subset C(\Omega)$ is bounded and uniformly equicontinuous, 
	then the  conclusion of Theorem \ref{aa1} holds.
\end{theorem}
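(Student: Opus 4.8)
The plan is to derive Theorem \ref{aa2} (version II) from Theorem \ref{aa1} (version I), since the two statements differ only in how the uniform continuity hypothesis is packaged. The content of both theorems is identical once one establishes that ``uniform equicontinuity of $(u_n)$'' and ``existence of a common modulus of continuity $\omega$ for $(u_n)$'' are equivalent conditions. Thus the entire proof reduces to constructing, from a given uniform equicontinuity modulus $\eta(\cdot)$, an honest modulus of continuity $\omega$ (continuous, increasing, with $\omega(0)=0$) that dominates the oscillation of every $u_n$.

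First I would record what uniform equicontinuity gives us: for each $\delta>0$ there is $\eta(\delta)>0$ so that $\sup_n|u_n(x)-u_n(y)|\leq\delta$ whenever $|x-y|\leq\eta(\delta)$. The natural candidate for a modulus is the \emph{uniform oscillation function}
\[
\tilde\omega(t):=\sup_n\,\sup_{\substack{x,y\in\Omega\\ |x-y|\leq t}}|u_n(x)-u_n(y)|,\qquad t\geq 0.
\]
This $\tilde\omega$ is finite (by boundedness of $(u_n)$ it is at most $2\sup_n\|u_n\|$), nondecreasing in $t$, and satisfies $\sup_n|u_n(x)-u_n(y)|\leq\tilde\omega(|x-y|)$ by construction. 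Uniform equicontinuity is exactly the statement that $\tilde\omega(t)\to 0$ as $t\downarrow 0$, i.e.\ $\tilde\omega$ is right-continuous at $0$ with $\tilde\omega(0^+)=0$; redefining $\tilde\omega(0):=0$ makes it continuous at the origin.

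The one technical nuisance is that $\tilde\omega$ need not be continuous or strictly increasing at interior points, whereas the definition of modulus of continuity in Theorem \ref{aa1} asks for a continuous, increasing $\omega$. This is a routine regularization: I would majorize $\tilde\omega$ by a genuine modulus, for instance by setting
\[
\omega(t):=\inf_{s>0}\Big(\tilde\omega(s)+\tfrac{t}{s}\,\big(2\textstyle\sup_n\|u_n\|\big)\Big),
\]
the lower-semicontinuous (indeed Lipschitz-on-compacts) envelope that is nondecreasing, continuous, satisfies $\omega(0)=0$ and $\omega\geq\tilde\omega$; adding a term like $t$ if one wants strict monotonicity. Then $\sup_n|u_n(x)-u_n(y)|\leq\tilde\omega(|x-y|)\leq\omega(|x-y|)$, so the hypothesis \eq{mod_cont} of Theorem \ref{aa1} holds with this $\omega$. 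Applying version I then yields the subsequence $(u_{n(k)})$ and limit $u\in C(\Omega)$ converging pointwise on $\Omega$, which is precisely the conclusion asserted in version II.

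The main obstacle, such as it is, lies entirely in this regularization step: verifying that the constructed $\omega$ is genuinely continuous and increasing while still dominating $\tilde\omega$, and confirming $\omega(0^+)=0$ depends only on $\tilde\omega(0^+)=0$. I expect no difficulty in the convergence portion, since that is simply quoting Theorem \ref{aa1}; the substantive (though standard) work is the passage from an abstract equicontinuity modulus $\eta$ to a bona fide modulus of continuity $\omega$, after which the reduction is immediate.
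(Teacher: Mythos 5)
Your reduction of version II to version I is correct; the paper itself offers no proof of Theorem \ref{aa2}, simply recording it as a standard restatement of Theorem \ref{aa1} with a citation to \cite{dib}, so there is no in-paper argument to compare against. The substance of your proposal is the (well-known) equivalence between uniform equicontinuity and the existence of a common modulus of continuity, and your construction does establish it: the uniform oscillation function $\tilde\omega$ is nondecreasing, bounded by $M:=2\sup_n\|u_n\|$, and has $\tilde\omega(0^+)=0$ precisely by uniform equicontinuity; the envelope
\[
\omega(t):=\inf_{s>0}\Big(\tilde\omega(s)+\tfrac{t}{s}\,M\Big)
\]
is an infimum of affine functions of $t$, hence concave and continuous on $(0,\infty)$, nondecreasing, satisfies $\omega(0^+)\leq\tilde\omega(s)$ for every $s>0$ and hence $\omega(0^+)=0$, and dominates $\tilde\omega$ (for $s\geq t$ use monotonicity of $\tilde\omega$; for $s<t$ use $\tfrac{t}{s}M>M\geq\tilde\omega(t)$). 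Adding $t$ gives strict monotonicity if the footnote's ``increasing'' is read strictly. With this $\omega$ the hypothesis \eq{mod_cont} holds and Theorem \ref{aa1} applies verbatim. The only caveat worth flagging is degenerate: if some $u_n$ is unbounded the quantity $M$ is infinite, but the hypothesis that $(u_n)$ is bounded (in uniform norm) rules this out, and you invoke exactly that. So the argument is complete.
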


To implement the extension strategy formulated above, we consider a 
bounded sequence $(u_n)\subset \mathcal B(\Omega)$ and assume that for each $\veps>0$
there is a $C(\Omega)$-sequence which is uniformly $\veps$-close 
to $(u_n)$ and which meets the conditions in Theorem \ref{aa1}. That is, for each 
$\veps>0$ there is a sequence $(v_n^\veps)_n\subset C(\Omega)$ and a modulus of continuity $\omega^\veps$
so that 
\beq\label{aa3}
	\|u_n-v_n^\veps\|\leq \veps\qquad\text{for all $n$,}
\eeq 
and
\beq\label{aa4}
	\sup_n |v_n^\veps(x)-v_n^\veps(y)|\leq \omega^\veps(|x-y|)
	\qquad\text{for all $x,y\in\Omega$.}
\eeq
However, these conditions imply that the 
original sequence $(u_n)$ itself meets the conditions in Theorem \ref{aa2}.
 Indeed, for any $n$ and for any $\veps>0$, \eq{aa3}-\eq{aa4} yield
\[|u_n(x)-u_n(y)|\leq |u_n(x)-v_n^\veps(x)|+|v_n^\veps(x)-v_n^\veps(y)|
+|v_n^\veps(y)-u_n(y)|\leq 2\veps+\omega^\veps(|x-y|).\]
Then, given $\delta>0$, set $\veps:=\frac{\delta}{3}$ and  choose $\eta(\delta)$ 
so small that $\omega^\veps(s)\leq\frac{\delta}{3}$ for $s\in[0,\eta(\delta)]$.
This gives 
\[\sup_n|u_n(x)-u_n(y)|\leq\delta\qquad\text{whenever $x,y\in\Omega$ satisfy $|x-y|\leq \eta(\delta)$.}\]
In particular, $(u_n)$ is a sequence in $C(\Omega)$
to which Theorem \ref{aa2} applies. This shows that also the Ascoli-Arzel\`{a} theorem is saturated
with respect to Fra{\v{n}}kov{\'a}'s extension strategy.

\begin{remark}
	Similar considerations show that the Kolmogorov-Riesz theorem characterizing
	precompactness in $L^p(\RR^N)$ is likewise saturated. 
\end{remark}

\begin{bibdiv}
\begin{biblist}
\bib{afp}{book}{
   author={Ambrosio, Luigi},
   author={Fusco, Nicola},
   author={Pallara, Diego},
   title={Functions of bounded variation and free discontinuity problems},
   series={Oxford Mathematical Monographs},
   publisher={The Clarendon Press, Oxford University Press, New York},
   date={2000},
   pages={xviii+434},
   isbn={0-19-850245-1},
   review={\MR{1857292}},
}
\bib{da}{article}{
   author={Davison, T. M. K.},
   title={A generalization of regulated functions},
   journal={Amer. Math. Monthly},
   volume={86},
   date={1979},
   number={3},
   pages={202--204},
   issn={0002-9890},
   review={\MR{522344}},
   doi={10.2307/2321523},
}
\bib{dib}{book}{
   author={DiBenedetto, Emmanuele},
   title={Real analysis},
   series={Birkh\"{a}user Advanced Texts: Basler Lehrb\"{u}cher. [Birkh\"{a}user
   Advanced Texts: Basel Textbooks]},
   publisher={Birkh\"{a}user Boston, Inc., Boston, MA},
   date={2002},
   pages={xxiv+485},
   isbn={0-8176-4231-5},
   review={\MR{1897317}},
   doi={10.1007/978-1-4612-0117-5},
}
\bib{di}{book}{
   author={Dieudonn\'{e}, J.},
   title={Foundations of modern analysis},
   series={Pure and Applied Mathematics, Vol. X},
   publisher={Academic Press, New York-London},
   date={1960},
   pages={xiv+361},
   review={\MR{0120319}},
}
\bib{dn}{book}{
   author={Dudley, Richard M.},
   author={Norvai\v{s}a, Rimas},
   title={Differentiability of six operators on nonsmooth functions and
   $p$-variation},
   series={Lecture Notes in Mathematics},
   volume={1703},
   note={With the collaboration of Jinghua Qian},
   publisher={Springer-Verlag, Berlin},
   date={1999},
   pages={viii+277},
   isbn={3-540-65975-7},
   review={\MR{1705318}},
   doi={10.1007/BFb0100744},
}
\bib{fr}{article}{
   author={Fra\v{n}kov\'{a}, Dana},
   title={Regulated functions},
   language={English, with Czech summary},
   journal={Math. Bohem.},
   volume={116},
   date={1991},
   number={1},
   pages={20--59},
   issn={0862-7959},
   review={\MR{1100424}},
}
\bib{le}{book}{
   author={Leoni, Giovanni},
   title={A first course in Sobolev spaces},
   series={Graduate Studies in Mathematics},
   volume={105},
   publisher={American Mathematical Society, Providence, RI},
   date={2009},
   pages={xvi+607},
   isbn={978-0-8218-4768-8},
   review={\MR{2527916}},
   doi={10.1090/gsm/105},
}
\end{biblist}
\end{bibdiv}

\end{document}